\documentclass[12pt,leqno]{amsart}
\usepackage{latexsym,amsmath,amssymb,mathrsfs,amsthm}
\usepackage{graphicx}

\title[The Dubovitski\u{\i}-Sard Theorem in Sobolev Spaces]{The Dubovitski\u{\i}-Sard Theorem in Sobolev Spaces}
\author{Piotr Haj\l{}asz and Scott Zimmerman}

\address{P.\ Haj{\l}asz: Department of Mathematics, University of Pittsburgh, 301
  Thackeray Hall, Pittsburgh, PA 15260, USA, {\tt hajlasz@pitt.edu}}

\address{S.\ Zimmerman: Department of Mathematics, University of Pittsburgh, 301
  Thackeray Hall, Pittsburgh, PA 15260, USA, {\tt srz5@pitt.edu}}

\thanks{P.H.\ was supported by NSF grant DMS-1161425.}

\setlength{\oddsidemargin}{1pt}
\setlength{\evensidemargin}{1pt}
\setlength{\topmargin}{1pt}       
\setlength{\textheight}{650pt}    
\setlength{\textwidth}{460pt}     

\belowdisplayskip=18pt plus 6pt minus 12pt \abovedisplayskip=18pt
plus 6pt minus 12pt
\parskip 8pt plus 1pt


\def\rank{{\rm rank\,}}

\def\eps{\varepsilon}

\def\id{{\rm id\, }}

\def\H{{\mathcal H}}

\newtheorem{theorem}{Theorem}
\newtheorem{lemma}[theorem]{Lemma}

\newtheorem{claim}[theorem]{Claim}



\def\diam{{\rm diam\,}}


\theoremstyle{definition}
\newtheorem{remark}[theorem]{Remark}

\newcommand{\barint}{
\rule[.036in]{.12in}{.009in}\kern-.16in \displaystyle\int }

\newcommand{\barcal}{\mbox{$ \rule[.036in]{.11in}{.007in}\kern-.128in\int $}}

\newcommand{\bbbn}{\mathbb N}
\newcommand{\bbbr}{\mathbb R}

\def\diam{\operatorname{diam}}


\def\mvint_#1{\mathchoice
          {\mathop{\vrule width 6pt height 3 pt depth -2.5pt
                  \kern -8pt \intop}\nolimits_{\kern -3pt #1}}%
          {\mathop{\vrule width 5pt height 3 pt depth -2.6pt
                  \kern -6pt \intop}\nolimits_{#1}}%
          {\mathop{\vrule width 5pt height 3 pt depth -2.6pt
                  \kern -6pt \intop}\nolimits_{#1}}%
          {\mathop{\vrule width 5pt height 3 pt depth -2.6pt
                  \kern -6pt \intop}\nolimits_{#1}}}


\numberwithin{theorem}{section} \numberwithin{equation}{section}

\begin{document}

\subjclass[2010]{46E35, 58C25}
\keywords{Sard theorem, Sobolev spaces, Kneser-Glaeser theorem}
\sloppy


\sloppy


\begin{abstract}

The Sard theorem from 1942 requires that a mapping $f:\mathbb{R}^n \to \mathbb{R}^m$ is of class $C^k$, $k > \max (n-m,0)$. 
In 1957 Duvovitski\u{\i} generalized Sard's theorem  to the case of $C^k$ mappings for all $k$. 
Namely he proved that, for almost all $y\in \mathbb{R}^m$, $\H^{\ell}(C_f \cap f^{-1}(y))=0$ where $\ell = \max(n-m-k+1,0)$, $\H^{\ell}$ 
denotes the Hausdorff measure, and $C_f$ is the set of critical points of $f$. 
In 2001 De Pascale proved that the Sard theorem holds true for Sobolev mappings of the class 
$W_{\rm loc}^{k,p}(\bbbr^n,\mathbb{R}^m)$, $k>\max(n-m,0)$ and $p>n$. We will show that also Dubovitski\u{\i}'s theorem 
can be generalized to the case of $W_{\rm loc}^{k,p}(\bbbr^n,\mathbb{R}^m)$ mappings for all $k\in\bbbn$ and $p>n$.

\end{abstract}

\maketitle

\section{Introduction}

Originally proven in 1942, Arthur Sard's \cite{sard} famous theorem asserts that the set of critical values of a sufficiently 
regular mapping is null. We will use the following notation to represent the \emph{critical set} of a given smooth map $f: \mathbb{R}^n \to \mathbb{R}^m$:
$$
C_f = \{x \in \mathbb{R}^n \; | \; \mathrm{rank} \, Df(x) < m \}.
$$
Throughout this paper, we will assume that $m$ and $n$ are integers at least 1.
Most notation used in the introduction is carefully explained in Section~\ref{notation}.

\begin{theorem}[Sard]
Suppose $f: \mathbb{R}^n \to \mathbb{R}^m$ is of class $C^k$. If $k > \mathrm{max}(n-m,0)$, then 
$$
\mathcal{H}^m (f(C_f)) = 0.
$$
\end{theorem}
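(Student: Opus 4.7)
The plan is to follow the classical Morse--Sard induction on the source dimension $n$. The base case $n=0$ is immediate, and the case $n<m$ follows at once since the hypothesis then forces $k\geq 1$, making $f$ locally Lipschitz, so that $f(\mathbb{R}^n)$ has vanishing $\mathcal{H}^m$-measure whenever $m>n$. For the inductive step with $n\geq m$, assume the theorem holds on $\mathbb{R}^{n-1}$ for every target dimension at the corresponding Sard smoothness threshold.

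The key device is the Whitney filtration
\[
C_f \supseteq \Sigma_1 \supseteq \Sigma_2 \supseteq \cdots \supseteq \Sigma_k,
\]
where $\Sigma_i = \{x : \partial^{\alpha} f(x) = 0 \text{ for all multi-indices } \alpha \text{ with } 1 \leq |\alpha| \leq i\}$. It suffices to control the images of $C_f\setminus\Sigma_1$, of $\Sigma_i\setminus\Sigma_{i+1}$ for $1\leq i\leq k-1$, and of $\Sigma_k$ separately. Near a point of $C_f\setminus\Sigma_1$ some first partial of $f$ is nonzero, and a $C^k$ coordinate straightening brings $f$ into the form $(t,x')\mapsto (t,g(t,x'))$ with each slice $x'\mapsto g(t,x')$ a $C^k$ map $\mathbb{R}^{n-1}\to\mathbb{R}^{m-1}$; Fubini together with the inductive hypothesis then kills this piece. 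For $\Sigma_i\setminus\Sigma_{i+1}$ with $i\geq 1$, some order-$i$ partial derivative vanishes on $\Sigma_i$ but has nonzero gradient, so by the implicit function theorem this portion of $\Sigma_i$ lies locally on a $C^{k-i}$ hypersurface; restricting $f$ to it reduces again to dimension $n-1$.

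The terminal piece $\Sigma_k$ is handled directly by Taylor's theorem: at any $x\in\Sigma_k$ the Taylor polynomials of order up to $k$ vanish, so $\operatorname{diam} f(Q)\leq C\delta^{k+1}$ for any cube $Q$ of side $\delta$ containing $x$. Covering a compact subset of $\Sigma_k$ by $O(\delta^{-n})$ such cubes, and noting that each $f(Q)$ lies in a ball of $\mathcal{H}^m$-measure $O(\delta^{m(k+1)})$, we obtain a total bound $O(\delta^{m(k+1)-n})\to 0$ as $\delta\to 0$, since the hypothesis $k>\max(n-m,0)$ is strictly stronger than $m(k+1)>n$.

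I expect the main obstacle to be bookkeeping at the sharp threshold $k>\max(n-m,0)$. The reduction on $C_f\setminus\Sigma_1$ produces a $C^k$ map $\mathbb{R}^{n-1}\to\mathbb{R}^{m-1}$ still satisfying $k>(n-1)-(m-1)$, so the inductive hypothesis applies cleanly. The reduction on $\Sigma_i\setminus\Sigma_{i+1}$, however, loses $i$ derivatives, and one has to verify that the resulting lower-dimensional problem either still falls under the inductive hypothesis or can be iterated until the residue is captured by the Taylor estimate on $\Sigma_k$. Orchestrating these two reductions so that they close consistently with the terminal Taylor bound---without assuming more smoothness than $k>\max(n-m,0)$---is the delicate point of the classical argument.
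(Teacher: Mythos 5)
The paper does not prove the classical Sard theorem directly; it is cited from \cite{sard} and observed to be the special case $\ell=0$ of Dubovitski\u{\i}'s Theorem~\ref{dubovClassical}, which in turn follows from the paper's main Theorem~\ref{mainTheorem}. The proof strategy the paper actually implements (Section~\ref{sec:mainproof}) uses the same coarse decomposition you propose, namely $C_f = K \cup A_1 \cup \dots \cup A_{k-1}$ with $K$ the positive-rank piece and $A_s$ the $s$-flat piece, plus induction on $n$. Where the two routes diverge is at the step you flag as delicate, and this is where your proposal has a real gap.

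On $\Sigma_i\setminus\Sigma_{i+1}$ with $i\ge 2$ your parametrization $g$ of the hypersurface $\{D^\gamma f_j=0\}$ (with $|\gamma|=i$) is only of class $C^{k-i}$, so $f\circ g$ is only $C^{k-i}$ on $\mathbb{R}^{n-1}$. The inductive hypothesis in dimension $n-1$ and target dimension $m$ requires smoothness exceeding $n-1-m$, i.e.\ $k-i>n-1-m$, which is $k>n-m+i-1$. For $i\ge 2$ this is strictly stronger than the given $k>n-m$, so the induction simply does not close. You explicitly acknowledge this loss of derivatives in your last paragraph, but you do not resolve it, and the resolution is the entire content of the Kneser--Glaeser Rough Composition theorem (Theorem~\ref{rough} in the paper): precisely because $f$ is $i$-flat on the set in question, the formal composite $f\circ g$ admits a $C^k$ (not merely $C^{k-i}$) extension $F$ that is still $i$-flat where it matters, and it is this $F$ --- not $f\circ g$ --- to which the induction is applied. (The alternative classical device, used by Sard, Sternberg and De Pascale, is a Morse-type decomposition of the critical set; the paper deliberately avoids that in favor of Kneser--Glaeser, following Thom and Figalli.) Without one of these devices the $i\ge 2$ pieces are not handled.

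A secondary but genuine slip: for a $C^k$ map the Taylor bound on $\Sigma_k$ is $|f(y)-f(x)| = o(|y-x|^k)$, not $O(|y-x|^{k+1})$; the extra order would require $C^{k+1}$ or at least $C^{k,1}$. The covering count then yields $O\big(\delta^{-n}\big)\cdot\big(\eps(\delta)\,\delta^k\big)^m = \eps(\delta)^m\,\delta^{mk-n}$, and one checks from $k\ge n-m+1$ (for $n\ge m\ge 1$) that $mk\ge n$, so the factor $\eps(\delta)^m\to 0$ is what forces the limit to vanish, including in the equality case $mk=n$. Your stated exponent $m(k+1)-n$ and the assertion that the hypothesis is ``strictly stronger'' than needed are artifacts of the overstated Taylor estimate.
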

Here and in what follows by $\H^k$ we denote the $k$-dimensional Hausdorff measure.

Several results have shown that Sard's result is optimal, see e.g. \cite{dubov,Grinberg,HajWhit,Kaufman,malysz,Whitney}.
In 1957 Dubovitski\u{\i} \cite{dubov},
extended Sard's theorem to all orders of smoothness $k$. See \cite{BojHaj} for a modernized proof of ths result
and some generalizations.

\begin{theorem}[Dubovitski\u{\i}]
\label{dubovClassical}
Fix $n,m,k \in \mathbb{N}$. Suppose $f: \mathbb{R}^n \to \mathbb{R}^m$ is of class $C^k$. Write $\ell = \max(n-m-k+1,0)$. Then 
$$
\mathcal{H}^{\ell}(C_f \cap f^{-1}(y)) = 0
\quad
\text{for a.e. $y \in \mathbb{R}^m$.}
$$
\end{theorem}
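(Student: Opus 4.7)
The plan is to prove Theorem~\ref{dubovClassical} by induction on the source dimension $n$, reducing to a bounded cube by countable subadditivity of $\mathcal{H}^\ell$ and stratifying $C_f$. The base case $n \le m$ forces $\ell = 0$, and the assertion that $f^{-1}(y) \cap C_f = \emptyset$ for a.e.\ $y$ is Sard's theorem. For $n > m$ I decompose $C_f$ first by the rank of $Df$ and then, inside the rank-zero piece, by the order of vanishing of higher derivatives.

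For $1 \le r \le m-1$ let $B_r = \{x \in C_f : \rank Df(x) = r\}$. Near each $x_0 \in B_r$, the implicit function theorem supplies $C^k$ source coordinates in which $f(x_1,\ldots,x_n) = (x_1,\ldots,x_r,g(x))$ with $g : \mathbb{R}^n \to \mathbb{R}^{m-r}$ of class $C^k$ and $Dg \equiv 0$ on $B_r$ locally. Fubini in the first $r$ target variables reduces the estimate on $B_r \cap f^{-1}(y)$ to the same estimate for the slice $x' \mapsto g(y_1,\ldots,y_r,x')$, a $C^k$ map from $\mathbb{R}^{n-r}$ to $\mathbb{R}^{m-r}$ with unchanged exponent $\max((n-r)-(m-r)-k+1,0) = \ell$. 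The inductive hypothesis in dimension $n - r < n$ settles this stratum.

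What remains is the rank-zero stratum $\Sigma = \{x \in C_f : Df(x) = 0\}$, which I substratify by the smallest order at which a higher derivative fails to vanish:
\begin{align*}
\Sigma_j &= \{x \in \Sigma : D^i f(x) = 0 \text{ for } 1 \le i \le j,\; D^{j+1} f(x) \ne 0\}, \quad 1 \le j \le k-1,\\
\Sigma_k &= \{x \in \Sigma : D^i f(x) = 0 \text{ for all } 1 \le i \le k\}.
\end{align*}
The flat stratum $\Sigma_k$ is handled directly: divide the ambient cube into $N^n$ subcubes of side $1/N$; for each subcube $Q$ meeting $\Sigma_k$, Taylor's theorem yields $\diam f(Q) \le \omega(1/N)\,N^{-k}$ for a modulus $\omega \to 0$. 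Bounding the counting function $y \mapsto \#\{Q : y \in f(Q)\}$ and integrating against Lebesgue measure on $\mathbb{R}^m$ gives
\begin{equation*}
\int_{\mathbb{R}^m} \mathcal{H}^\ell_{\sqrt{n}/N}\bigl(\Sigma_k \cap f^{-1}(y)\bigr)\,dy \;\le\; C\, N^{n - \ell - km}\,\omega(1/N)^m \;=\; C\, N^{-(k-1)(m-1)}\,\omega(1/N)^m,
\end{equation*}
which vanishes as $N \to \infty$ (either factor suffices), so $\mathcal{H}^\ell(\Sigma_k \cap f^{-1}(y)) = 0$ for a.e.\ $y$.

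The main obstacle is the intermediate strata $\Sigma_j$ with $1 \le j \le k-1$, and this is precisely where the Kneser-Glaeser theorem enters. At any $x_0 \in \Sigma_j$ some partial derivative $\partial^\beta f_i$ of order $j$ vanishes on $\Sigma_j$ while having nonzero gradient at $x_0$, so $\Sigma_j$ is contained locally in the zero set of a $C^{k-j}$ function with nonzero gradient -- a $C^{k-j}$ hypersurface. Restricting $f$ naively to this hypersurface loses $j$ derivatives and obstructs the induction; the Kneser-Glaeser theorem repairs this defect, providing a $C^k$ hypersurface $H$ (together with a correction of $f$ by a function whose vanishing on $\Sigma_j$ is high enough that its image contribution is controlled by the flat-stratum estimate above) that contains $\Sigma_j$ locally and on which $f$ is still $C^k$. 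Parametrizing $H$ as a $C^k$ graph and applying the inductive hypothesis in dimension $n-1$ to $f|_H$ yields $\mathcal{H}^{\ell - 1}(\Sigma_j \cap f^{-1}(y)) = 0$ for a.e.\ $y$; since $\mathcal{H}^{\ell-1} = 0$ implies $\mathcal{H}^\ell = 0$, this closes the induction on $n$.
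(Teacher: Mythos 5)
Your argument follows essentially the same Kneser--Glaeser inductive scheme that the paper uses for the Sobolev version (Theorem~\ref{mainTheorem}, Section~\ref{sec:mainproof}), of which the present classical statement is a special case (and for which the paper itself points to \cite{dubov,BojHaj} rather than giving a separate proof). Your rank strata $B_r$ are the paper's $K_r$ and are handled identically (implicit function theorem to straighten, Fubini over the first $r$ target coordinates, induction in $\mathbb R^{n-r}$); your intermediate strata $\Sigma_j$ are the paper's $A_{s-1}\setminus A_s$ and go through Kneser--Glaeser; your flat stratum $\Sigma_k$ corresponds to the deepest part of the paper's $A_{k-1}$ and is handled by a direct covering estimate. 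The one genuine structural difference is that you peel $\Sigma_{k-1}$ (where some $k$-th derivative is nonzero) away from $\Sigma_k$ and route it through Kneser--Glaeser, whereas the paper handles all of $A_{k-1}$ directly. That split is the correct classical adaptation: with $W^{k,p}$, $p>n$, Morrey gives the exponent $k-\tfrac{n}{p}>k-1$ \emph{plus} a small $L^p$ factor of $D^k f$, but with pure $C^k$ regularity the Taylor exponent on $\Sigma_{k-1}$ is only $k-1$ with no smallness, which is not enough when $m=1$. (Alternatively one can observe that density points of $\{D^{k-1}f=0\}$ satisfy $D^k f=0$, so $\Sigma_{k-1}$ has Lebesgue measure zero, and run the paper's ``cover by a small open set'' argument of Step~1 of Claim~\ref{claim1}.)

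One place where what you wrote is not right is the description of what Kneser--Glaeser delivers. It does \emph{not} produce a $C^k$ hypersurface $H$ containing $\Sigma_j$ on which $f$ is still $C^k$, nor is there a ``correction of $f$'' whose image contributes a separately-controlled error. The hypersurface $\{D^\beta f_i = 0\}$ is only $C^{k-j}$, and $f$ restricted to it is only $C^{k-j}$. What Theorem~\ref{rough} actually gives, applied with $s=j$ (which requires $j<k$, satisfied), $g$ the $C^{k-j}$ local parametrization, and compact $A^*=g^{-1}(\overline{W}\cap\{D^\alpha f=0,\,|\alpha|\le j\})$, is a $C^k$ map $F:\mathbb R^{n-1}\to\mathbb R^m$ on the \emph{parameter domain} with $F=f\circ g$ on $A^*$ and $F$ $j$-flat there, so $A^*\subset C_F$. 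You then apply the induction hypothesis to $F$ (not to ``$f|_H$''), obtaining $\mathcal H^{\ell'}(A^*\cap F^{-1}(y))=0$ for a.e.\ $y$ with $\ell'=\max((n-1)-m-k+1,0)=\max(\ell-1,0)$, and push forward through the locally Lipschitz $g$; since $\ell'\le\ell$, $\mathcal H^\ell=0$ follows. Your exponent bookkeeping is correct once this is fixed. Two small further remarks: the compactness hypotheses of Theorem~\ref{rough} should be arranged by localizing to $\overline{W}\Subset U$ as the paper does, and the function $y\mapsto\mathcal H^\ell(\Sigma_k\cap f^{-1}(y))$ is not obviously measurable, so the integration in your $\Sigma_k$ estimate should be taken as an upper integral, as in the paper.
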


This result tells us that almost every level set of a smooth mapping intersects with its critical set on an $\ell$-null set. 
Higher regularity of the function implies a reduction in the Hausdorff dimension of the overlap between 
$f^{-1}(y)$ and $C_f$ for a.e. $y \in \mathbb{R}^m$.

Notice that if $k > \mathrm{max}(n-m,0)$, then $n-m-k+1 \leq 0$,
and so $\H^{\ell}=\H^{0}$ is simply the counting measure on 
$\bbbr^n$. That is, if $f: \mathbb{R}^n \to \mathbb{R}^m$ is of class $C^k$ and additionally $k > \mathrm{max}(n-m,0)$, 
Dubovitski\u{\i}'s theorem implies that $f^{-1}(y) \cap C_f$ is empty for almost every $y \in \mathbb{R}^m$. In other words,
$\H^m (f(C_f)) = 0$.
Thus Sard's theorem is a special case of Dubovitski\u{\i}'s theorem.

Recently, many mathematicians have worked to generalize Sard's result to the class of Sobolev mappings \cite{Alberti,BojHaj,Bourgain,Bourgain2,Depascale,Figalli,Korobkov,vander}.
Specifically, in 2001 De Pascale \cite{Depascale} proved the following version of Sard's theorem for Sobolev mappings.

\begin{theorem}
\label{dePascale}
Suppose $k > \max(n-m,0)$. Suppose $\Omega \subset \bbbr^n$ is open. If $f \in W_{\rm loc}^{k,p}(\Omega,\mathbb{R}^m)$ for $n<p<\infty$, then $\mathcal{H}^m(f(C_f))=0$.
\end{theorem}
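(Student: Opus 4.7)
The plan is to reduce the Sobolev Sard theorem to the classical $C^k$ Sard theorem (Theorem~1.1) by approximating $f$ with $C^k$ mappings in the Lusin sense, and then to control the image of the small residual set using the strong regularity provided by $p>n$. Since $p>n$, the Morrey embedding gives $W^{k,p}_{\loc}(\Omega,\bbbr^m)\hookrightarrow C^{k-1,1-n/p}_{\loc}(\Omega,\bbbr^m)$, so every classical derivative $D^\alpha f$ with $|\alpha|\le k-1$ is H\"older continuous and the critical set $C_f$ is well defined (closed when $k\ge 2$).

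Next I would apply a Lusin-type approximation theorem for Sobolev mappings (Calder\'{o}n--Zygmund, or Bojarski--Haj\l{}asz): for every $\eps>0$ there exist a closed set $F_\eps\subseteq\Omega$ with $|\Omega\setminus F_\eps|<\eps$ and a function $g_\eps\in C^k(\bbbr^n,\bbbr^m)$ such that $f\equiv g_\eps$ on $F_\eps$ and $D^\alpha f(x)=D^\alpha g_\eps(x)$ for all $|\alpha|\le k$ at every point $x\in F_\eps$ of Lebesgue density one. The classical Sard theorem applied to $g_\eps$ yields $\H^m(g_\eps(C_{g_\eps}))=0$. Because $\rank Df(x)=\rank Dg_\eps(x)$ at density-one points of $F_\eps$, one has $C_f\cap F_\eps\subseteq C_{g_\eps}$ modulo a Lebesgue-null subset of $F_\eps$, so $\H^m(f(C_f\cap F_\eps))\le\H^m(g_\eps(C_{g_\eps}))=0$ for every $\eps>0$. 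Taking $\eps=1/j$ and setting $N=\Omega\setminus\bigcup_j F_{1/j}$, the problem reduces to showing $\H^m(f(N))=0$ for this Lebesgue-null set.

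The main technical obstacle I expect is this last step: the Sobolev Lusin approximation produces an exceptional set with small \emph{Lebesgue} measure, whereas the theorem demands control of the $\H^m$ measure of its \emph{image}. When $m\ge n$ this is handled by the Marcus--Mizel Lusin $N$ property for $W^{1,p}$ mappings with $p>n$, which sends Lebesgue-null sets to $\H^n$-null sets, together with the elementary implication that $\H^n(A)=0$ entails $\H^m(A)=0$ in $\bbbr^m$ whenever $m\ge n$. When $m<n$ one must establish a sharper area-type inequality of the form $\H^m(f(E))\le C\int_E|Df|^m\,dx$ for measurable $E\subseteq\Omega$; this follows from the Morrey pointwise estimate $|f(y)-f(x)|\le C|y-x|^{1-n/p}\|Df\|_{L^p(B(x,|y-x|))}$ and a Vitali covering of $E$ by small balls, and then applying it to $E=N$ forces $\H^m(f(N))=0$ since $|N|=0$ and $|Df|^m\in L^1_{\loc}$ (by H\"older's inequality, as $p>n\ge m$). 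Combining, $\H^m(f(C_f))\le\sum_j\H^m(f(C_f\cap F_{1/j}))+\H^m(f(N))=0$.
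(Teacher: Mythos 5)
The paper does not give a standalone proof of Theorem~\ref{dePascale}; it is cited from De~Pascale, and the introduction observes that it is the $\ell=0$ special case of the Dubovitski\u{\i}-type Theorem~\ref{mainTheorem}, whose proof is built on the Kneser--Glaeser composition theorem, a decomposition of $C_f$ by order of flatness, and the co-area formula. Your proposal is a genuinely different route (Lusin-type $C^k$ approximation plus the classical Sard theorem), but it has a gap that is fatal precisely in the interesting regime $m<n$.

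The gap is the claimed area-type inequality $\H^m(f(E))\le C\int_E|Df|^m\,dx$ for measurable $E$. This inequality is \emph{false}. Take $f(x)=(x_1,\dots,x_m)$ for $m<n$ and $E=[0,1]^m\times\{0\}^{n-m}$: then $|E|=0$, so $\int_E|Df|^m\,dx=0$, but $f(E)=[0,1]^m$ has $\H^m(f(E))=1$. Nor does the proposed Morrey--plus--Vitali argument produce it. Running the estimate through a covering of $E$ by balls $B_i=B(x_i,r_i)$ with bounded overlap, Morrey's inequality and H\"older with exponents $p/(p-m)$ and $p/m$ give
\begin{equation*}
\H^m_\delta(f(E))\le C\Bigl(\sum_i r_i^{\,m(1-n/p)\cdot\frac{p}{p-m}}\Bigr)^{\frac{p-m}{p}}\Bigl(\sum_i\int_{B_i}|Df|^p\Bigr)^{m/p},
\end{equation*}
and the exponent on $r_i$ is $\frac{m(p-n)}{p-m}$, which is $\ge n$ if and only if $m\ge n$. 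When $m<n$ this exponent is strictly less than $n$, so $\sum_i r_i^{m(p-n)/(p-m)}$ does \emph{not} vanish as $|\bigcup_i B_i|\to 0$, and the estimate gives no control on $\H^m(f(N))$ from $|N|=0$ alone. In short, for $m<n$ a first-order Morrey estimate is not enough to send Lebesgue-null sets to $\H^m$-null sets; one needs the extra decay $(\diam Q)^{k-n/p}$ that comes from the vanishing of derivatives up to order $k-1$ on the critical set. This is exactly what Lemma~\ref{L3}, Lemma~\ref{L5}, and the decomposition $C_f=K\cup A_1\cup\dots\cup A_{k-1}$ in the paper are engineered to exploit, and it is also the reason the hypothesis $k>n-m$ must actually enter the treatment of the exceptional set, not merely the Sard step for the smooth approximant. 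Note also a structural symptom of the gap: the exceptional null set $N$ from the Lusin approximation carries no information relating it to $C_f$ or to the flatness of $f$, so no estimate based solely on $|N|=0$ and $Df\in L^p$ can work. For $m\ge n$ your argument is fine (and for $m>n$ the whole theorem is trivial, as the paper observes, since $\H^m(f(\Omega))=0$), but that is the easy part.
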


In this paper we will use the usual notation $W^{k,p}(\mathbb{R}^n,\mathbb{R}^m)$ to indicate the Sobolev class of $L^p(\mathbb{R}^n,\mathbb{R}^m)$ 
mappings whose first $k$ weak partial derivatives have finite $L^p$ norm.

The purpose of this paper is to show that also the Dubovitski\u{\i} theorem generalizes to the case of $W_{\rm loc}^{k,p}$ mappings when $n<p<\infty$.
We must be very careful when dealing with Sobolev mappings because the set $f^{-1}(y)$ depends on what representative of $f$ we take.
If $k \geq 2$, then Morrey's inequality implies that $f$ has a representative of class $C^{k-1,1-\frac{n}{p}}$, so the critical set $C_f$ is well defined.
If $k=1$, then $Df$ is only defined almost everywhere and hence the set $C_f$ is defined up to a set of measure zero.
We will say that $f$ is \emph{precisely represented} if each component $f_i$ of $f$ satisfies
$$
f_i(x) = \lim_{r \to 0} \frac{1}{|B(x,r)|} \int_{B(x,r)} f_i(y) \, dy
$$
for all $x \in \Omega$ at which this limit exists. The Lebesgue differentiation theorem ensures that this is indeed a well defined representative of $f$.
In what follows, we will always refer to the $C^{k-1,1-\frac{n}{p}}$ representative of $f$ when $k \geq 2$ and a precise representation of $f$ when $k=1$. 
(Notice that the precise representative of $f$ and the smooth representative of $f$ are the same for $k \geq 2$.)

The main result of the paper reads as follows.
\begin{theorem}
\label{mainTheorem}
Fix $n,m,k \in \mathbb{N}$. Suppose $\Omega \subset \bbbr^n$ is open and $f\in W_{\rm loc}^{k,p}(\Omega,\mathbb{R}^m)$ for some $n<p<\infty$. If $\ell = \max(n-m-k+1,0)$, then
$$
\mathcal{H}^{\ell}(C_f \cap f^{-1}(y))=0
\quad
\text{for a.e. $y \in \mathbb{R}^m$.}
$$
\end{theorem}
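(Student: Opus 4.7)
The plan is to emulate the classical proof of Dubovitski\u{\i}'s theorem, as developed in Bojarski--Haj\l{}asz~\cite{BojHaj}, with the Kneser--Glaeser rough composition lemma replaced by a Sobolev analogue, and with the required smoothness reduced via Morrey's embedding $W_{\rm loc}^{k,p} \hookrightarrow C_{\rm loc}^{k-1,1-n/p}$.

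I would first dispose of the two boundary cases. When $\ell = 0$, the statement says that $C_f \cap f^{-1}(y) = \emptyset$ for a.e.\ $y$, i.e.\ $|f(C_f)| = 0$, which is precisely Theorem~\ref{dePascale}. When $k = 1$ and $n > m$, so $\ell = n-m$, the Sobolev coarea formula (applicable since $f$ is continuous with $Df \in L^p$, $p > n$) gives
\[
\int_{\mathbb{R}^m} \mathcal{H}^{n-m}(C_f \cap f^{-1}(y))\, dy = \int_{C_f} J_f\, dx = 0,
\]
so the level-set measure vanishes for a.e.\ $y$. Hence I may assume $k \geq 2$ and $\ell \geq 1$, which forces $n \geq m+k$, and I work with the $C^{k-1,1-n/p}$ representative, for which $D^j f$ is continuous for $0 \leq j \leq k-1$.

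For the main case I would stratify $C_f$ by $A_r = \{x : \rank Df(x) = r\}$, $r = 0, \dots, m-1$. Since $Df$ is continuous, each $\{\rank Df \leq r\}$ is closed, so the strata can be analyzed separately. On $A_r$ with $r \geq 1$ the implicit function theorem in the $C^{k-1,1-n/p}$ category locally straightens $f$ into a submersion onto $\mathbb{R}^r$ composed with a map of smaller effective target dimension; this reduces to a Dubovitski\u{\i}-type statement with smaller parameters, allowing an induction on the pair $(k,m)$. On the deepest stratum $C_f^{k-1} = \{D^i f = 0 \text{ for all } 1 \leq i \leq k-1\}$, the Bojarski--Haj\l{}asz pointwise Taylor estimate
\[
|f(y) - f(x)| \leq C|y-x|^{k - n/p}\, \|D^k f\|_{L^p(B(x,2|y-x|))}
\]
combined with a Whitney-type covering argument will control the Hausdorff measure of $f(C_f^{k-1})$ and of its intersection with individual level sets.

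The central obstacle is formulating and proving a Sobolev (really, $C^{k-1,1-n/p}$) version of the Kneser--Glaeser rough composition theorem, which is the engine driving the reduction between strata in the classical argument. Roughly, it asserts that composing two maps with H\"older-continuous $(k-1)$-th derivatives produces a map of the same regularity, provided one is sufficiently ``flat'' along the critical set of the other. Once this composition lemma and the implicit function theorem are available in the correct regularity class, the inductive scheme from Bojarski--Haj\l{}asz carries over, with H\"older/Morrey estimates systematically replacing the classical $C^k$ bounds.
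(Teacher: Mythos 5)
Your overall outline — stratify $C_f$ by rank and by order of flatness, control the deepest flat stratum by a Morrey--Taylor estimate and a covering argument, reduce the positive-rank strata by the implicit function theorem, and pass through the intermediate flat strata via a Kneser--Glaeser composition — does match the paper's plan, but you have misidentified the central difficulty, and the fix you propose is a wrong turn. You say the obstacle is proving a Sobolev (or $C^{k-1,1-n/p}$) version of the Kneser--Glaeser rough composition theorem. No such theorem is needed. By Morrey embedding $f$ has a $C^{k-1}$ representative; on $A_{s-1}\setminus A_s$ one has $D^\gamma f_j \in W^{k-s+1,p}_{\rm loc}\subset C^{k-s}$ for some $|\gamma|=s-1$, so the implicit function theorem gives a $C^{k-s}=C^{(k-1)-(s-1)}$ parametrization $g:V\subset\mathbb{R}^{n-1}\to\mathbb{R}^n$ of $\{D^\gamma f_j=0\}$, and $f$ is $(s-1)$-flat on $A_{s-1}$. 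The \emph{classical} Theorem~\ref{rough}, applied with $k-1$ in place of $k$ and $s-1$ in place of $s$, produces $F\in C^{k-1}(\mathbb{R}^{n-1},\mathbb{R}^m)$ equal to $f\circ g$ and $(s-1)$-flat on $A^*$. The arithmetic identity
$$
\max\bigl((n-1)-m-(k-1)+1,\,0\bigr)=\max(n-m-k+1,\,0)=\ell
$$
is what makes this work: dropping $n$ and $k$ each by one preserves $\ell$, so the induction hypothesis in dimension $n-1$ (for $F\in C^{k-1}\subset W^{k-1,p}_{\rm loc}$) yields $\mathcal{H}^\ell(C_F\cap F^{-1}(y))=0$ a.e., and the conclusion transports back by the locally Lipschitz $g$. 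Consequently the induction must be on $n$ alone, not on the pair $(k,m)$ as you suggest; your scheme never lowers $n$, so it cannot close on the flat strata.

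A secondary issue: in the positive-rank strata $K_r=\{\rank Df=r\}$ the paper reduces by the change of variables $Y(x)=(f_1,\dots,f_r,x_{r+1},\dots,x_n)$ and Fubini, arriving at slices $g_{\tilde x}\in W^{k,p}_{\rm loc}(\mathbb{R}^{n-r},\mathbb{R}^{m-r})$ with the \emph{same} $k$, and again $\max((n-r)-(m-r)-k+1,0)=\ell$, so the induction on $n$ applies. To run this, one must show $Y^{-1}$ and hence $f\circ Y^{-1}$ retain full $W^{k,p}_{\rm loc}$ regularity (not merely $C^{k-1,1-n/p}$, which is what your ``implicit function theorem in the $C^{k-1,1-n/p}$ category'' would give); this requires verifying that $W^{k,p}_{\rm loc}$ with $p>n$ is an algebra and is stable under composition with $W^{k,p}_{\rm loc}$ diffeomorphisms — a nontrivial point occupying Lemmas~\ref{ML1}--\ref{ML2} of the paper. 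Your disposal of $\ell=0$ by citing Theorem~\ref{dePascale} is correct (the paper instead re-derives it as a sub-case of Claim~\ref{claim1}), and your $k=1$ reduction via the coarea formula matches the paper.
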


If $m>n$, then since $p>n$ we may apply Morrey's inequality combined with H\"{o}lder's inequality to show that $\H^n(f(Q)) < \infty$ for any cube 
$Q \Subset \Omega$, and so $\H^m(f(\Omega)) = 0$. Thus $f^{-1}(y)$ is empty for almost every $y \in \bbbr^m$, and the theorem follows.

We will now discuss the details behind the argument that $\H^n(f(Q)) < \infty$ for any cube $Q \Subset \Omega$.
Fix $\delta>0$, and cover $Q$ with $2^{n\nu}$ congruent dyadic cubes $\{ Q_j \}_{j=1}^{2^{n\nu}}$ with pairwise disjoint interiors. 
According to Morrey's inequality (see Lemma~\ref{L2}),
$$
\diam f(Q_j) \leq C (\diam Q_j)^{1-\frac{n}{p}}\left(\int_{Q_j} |Df(z)|^p\, dz\right)^{1/p}
$$
for every $1\leq j\leq 2^{n\nu}$. 
Since $\diam Q_{j}=2^{-\nu}\diam Q$,
choosing $\nu$ large enough
gives $\sup_{j} \diam f(Q_j) < \delta$, and so we can estimate the pre-Hausdorff measure
\begin{align*}
\H_{\delta}^n(f(Q)) &\leq C \sum_{j=1}^{2^{n\nu}} (\diam f(Q_j))^n \\
&\leq 
C \sum_{j=1}^{2^{n\nu}} (\diam Q_j)^{n(1-\frac{n}{p})} \left(\int_{Q_j} |Df(z)|^p\, dz\right)^{n/p} \\
&\leq 
C \left( \sum_{j=1}^{2^{n\nu}} (\diam Q_j)^n \right)^{1-\frac{n}{p}} \left( \sum_{j=1}^{2^{n\nu}} \int_{Q_j} |Df(z)|^p\, dz \right)^{n/p} \\
&\leq 
C \H^n(Q)^{1-\frac{n}{p}} \left( \int_{Q} |Df(z)|^p\, dz \right)^{n/p}.
\end{align*}
We used H\"{o}lder's inequality with exponents $p/n$ and $p/(p-n)$ to obtain the third line. 
Since the right hand estimate does not depend on $\delta$, sending $\delta \to 0^+$ yields $\H^n(f(Q)) < \infty$. 
This completes the proof of Theorem~\ref{mainTheorem} when $m>n$. Hence we may assume that $m \leq n$.

We will now discuss the case $k=1$ to avoid any confusion involving the definition of $C_f$.
Since $m \leq n$, we may apply the following co-area formula due to Mal\'y, Swanson, and Ziemer \cite{malysz}:
\begin{theorem}
\label{coarea}
Suppose that $1 \leq m \leq n$, $\Omega \subset \bbbr^n$ is open, $p>m$, and $f \in W_{\rm loc}^{1,p}(\Omega,\bbbr^m)$ is precisely represented. 
Then the following holds for all measurable $E \subset \Omega$:
$$
\int_E |J_mf(x)| \, dx = \int_{\bbbr^m} \mathcal{H}^{n-m}(E \cap f^{-1}(y)) \, dy
$$
where $|J_mf|$ is the square root of the sum of the squares of the determinants of the $m \times m$ minors of $Df$.
\end{theorem}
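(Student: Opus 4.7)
\emph{Proof proposal.} The plan is to reduce this Sobolev coarea equality to Federer's classical coarea formula for Lipschitz maps by a Luzin-type decomposition of $\Omega$, and then to handle the resulting exceptional null set using the hypothesis $p>m$.

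First, apply the Sobolev Luzin approximation theorem (in the form of Liu, or Michael--Ziemer): for each positive integer $k$ there is a Lipschitz map $g_k\colon\bbbr^n\to\bbbr^m$ whose agreement set $A_k=\{x\in\Omega:f(x)=g_k(x)\}$ satisfies $|\Omega\setminus A_k|<1/k$ and $Df=Dg_k$ almost everywhere on $A_k$, hence $|J_mf|=|J_mg_k|$ almost everywhere there. Set $N=\Omega\setminus\bigcup_kA_k$, so that $|N|=0$. Applying Federer's coarea formula to each Lipschitz $g_k$ on the measurable set $A_k\cap E$, and using that $f\equiv g_k$ on $A_k$ forces $A_k\cap g_k^{-1}(y)=A_k\cap f^{-1}(y)$, gives
$$
\int_{A_k\cap E}|J_mf(x)|\,dx=\int_{\bbbr^m}\H^{n-m}(A_k\cap E\cap f^{-1}(y))\,dy.
$$
Disjointify by $B_k=A_k\setminus\bigcup_{j<k}A_j$, sum over $k$, and use $|N|=0$ on the left to obtain
$$
\int_E|J_mf(x)|\,dx=\int_{\bbbr^m}\H^{n-m}((E\setminus N)\cap f^{-1}(y))\,dy.
$$

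It remains to prove that $\H^{n-m}(N\cap f^{-1}(y))=0$ for almost every $y\in\bbbr^m$; this is the crux, and it is where the hypothesis $p>m$ is essential. I would establish a weak coarea inequality of the form
$$
\int^{*}_{\bbbr^m}\H^{n-m}(S\cap f^{-1}(y))\,dy\le C\int_S|Df(x)|^m\,dx
$$
for every measurable $S\subset\Omega$, with the upper integral on the left and the precise representative on the right. Applied to $S=N$, the right-hand side vanishes and the desired conclusion follows, after which combining with the identity above finishes the proof.

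To prove the weak inequality, a natural route is a Fubini/slicing argument: on $\H^{n-m}$-almost every affine $m$-plane $P$, the restriction $f|_{P\cap\Omega}$ lies in $W^{1,p}(P\cap\Omega,\bbbr^m)$ with $p>m=\dim P$, so by Morrey's embedding on $P$ it has a H\"older continuous representative that enjoys Luzin's $N$-property on $P$; consequently $f(P\cap N)$ has $\mathcal{L}^m$-measure zero and $P\cap N\cap f^{-1}(y)=\varnothing$ for $\mathcal{L}^m$-a.e.\ $y$. Integrating over the transverse directions via a standard integral-geometric identity and controlling the constants by $\int_S|Df|^m$ (via H\"older, trading $p$ for $m$ against the small volume of $S$) delivers the bound. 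The main obstacle is precisely this step: for $p\le n$ a null set can map onto a set of positive measure under a general Sobolev map, so one genuinely needs $p>m$ to restore the $N$-property after passing to an $m$-dimensional slice. Making the slicing rigorous for the precise (rather than continuous) representative of $f$, and handling the measurability of $y\mapsto\H^{n-m}(S\cap f^{-1}(y))$ carefully, is the technical heart of the Mal\'y--Swanson--Ziemer argument.
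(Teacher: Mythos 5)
First, note that the paper does not prove Theorem~\ref{coarea}: it is quoted from Mal\'y--Swanson--Ziemer \cite{malysz} and used as a black box, so your attempt must be judged on its own merits. Your overall architecture --- Luzin-type Lipschitz approximation, Federer's coarea formula applied on each agreement set, disjointification, and then a separate ``weak coarea inequality'' showing that the residual null set $N$ satisfies $\H^{n-m}(N\cap f^{-1}(y))=0$ for a.e.\ $y$ --- is the right skeleton (and is essentially the strategy of \cite{malysz}), and you correctly isolate the last step as the crux and as the place where $p>m$ must enter.

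The gap is in your proposed proof of that last step. Slicing by $m$-planes $P$ parallel to a fixed $m$-dimensional subspace and using the Luzin $N$-property of $f|_{P}$ gives, after Fubini, only that for a.e.\ $y$ the orthogonal projection of $N\cap f^{-1}(y)$ onto the complementary $(n-m)$-plane is $\H^{n-m}$-null; running over all directions you control the \emph{integral-geometric} measure of $N\cap f^{-1}(y)$, not its Hausdorff measure. By the Besicovitch--Federer projection theorem, a purely unrectifiable set can have almost every projection null and still have positive (even infinite) $\H^{n-m}$-measure, and nothing guarantees that $N\cap f^{-1}(y)$ is rectifiable, so no ``standard integral-geometric identity'' closes this. (A smaller quibble: H\"older continuity of $f|_P$ alone does not yield Luzin $N$; one needs the quantitative Morrey bound $\diam f(Q)\le C(\diam Q)^{1-m/p}\|Df\|_{L^p(Q)}$ combined with H\"older's inequality.) The correct argument is a direct covering estimate with no slicing: cover $N$ by an open set $U$ with $|U|$ small, subdivide into cubes $Q_i$ of diameter less than $1/j$ meeting $N$, use
$$
\H^{n-m}_{1/j}(N\cap f^{-1}(y))\le C\sum_i(\diam Q_i)^{n-m}\chi_{f(Q_i)}(y),
$$
integrate the upper integral in $y$, bound $\H^m(f(Q_i))\le C(\diam Q_i)^{m(1-\frac{n}{p})}\bigl(\int_{Q_i}|Df|^p\,dx\bigr)^{m/p}$ by Morrey's inequality, and apply H\"older's inequality with exponents $p/(p-m)$ and $p/m$ (this is exactly where $p>m$ is used) to obtain the bound $C|U|^{\frac{p-m}{p}}\|Df\|_{L^p(U)}^m\to 0$. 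This is precisely the computation the paper itself carries out, for general $k$ and $\ell=n-m-k+1$, in Step~1 of the proof of Claim~\ref{claim1}; the case $k=1$, $\ell=n-m$ is what your argument needs.
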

Notice that $|J_mf|$ is equals zero almost everywhere on the set $E = C_f$.
Therefore the above equality with $E = C_f$ reads
$$
0 = \int_{\bbbr^m} \mathcal{H}^{n-m}(C_f \cap f^{-1}(y)) \, dy=\int_{\bbbr^m} \H^\ell(C_f\cap f^{-1}(y))\, dy.
$$
That is, $\mathcal{H}^{\ell}(C_f \cap f^{-1}(y)) = 0$ for a.e. $y \in \bbbr^m$, and the theorem follows.

{\em Therefore, we may assume for the remainder of the paper that $m \leq n$ and $k \geq 2$.}

Most proofs of Sard-type results typically involve some form of a Morse Theorem \cite{morse} in which the critical set of a mapping is decomposed into 
pieces on which the function's difference quotients converge quickly. See \cite{sternberg} for the proof of the classical Sard theorem based on this method.
A version of the Morse Theorem was also used by De Pascale \cite{Depascale}.
However, there is another approach to the Sard theorem based on the so called
Kneser-Glaeser Rough Composition theorem, and this method entirely avoids the use of the Morse theorem. 
We say that a mapping $f:W \subset \mathbb{R}^r \to \mathbb{R}$ of class $C^k$ is \emph{$s$-flat} on $A \subset W$ for 
$1 \leq s \leq k$ if $D^{\alpha}f=0$ on $A$ for every $1 \leq |\alpha| \leq s$. 

\begin{theorem} [Kneser-Glaeser Rough Composition] 
\label{rough}
Fix positive integers $s,k,r,n$ with $s<k$. Suppose $V \subset \bbbr^r$ and $W \subset \bbbr^n$ are open.
Let $g:V \to W$ be of class $C^{k-s}$ and $f:W \to \bbbr$ be of class $C^k$.
Suppose $A^* \subset V$ and $A \subset W$ are compact sets with
\begin{enumerate}
\item $g(A^*) \subset A$ and
\item $f$ is $s$-flat on $A$.
\end{enumerate}
Then there is a function $F:\bbbr^r \to \bbbr$ of class $C^k$ so that $F = f \circ g$ on $A^*$ and $F$ is $s$-flat on $A^*$.
\end{theorem}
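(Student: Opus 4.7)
The plan is to build a Whitney jet of order $k$ on the compact set $A^*$ by formally invoking Fa\`a di Bruno's chain rule, verify that this jet satisfies Whitney's compatibility condition, and then apply the Whitney extension theorem to obtain the desired $C^k$ map $F$.

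For $p \in A^*$ and each multi-index $\beta$ with $|\beta|\le k$, I would define a candidate ``$\beta$-th derivative'' $T_p^\beta$ of $f\circ g$ by the Fa\`a di Bruno formula. Each summand has the form $c\, D^j f(g(p)) \prod_i D^{\gamma_i} g(p)$, where $j$ counts the factors involving $g$, each $|\gamma_i|\ge 1$, and (in the partition sense) $\sum_i |\gamma_i|=|\beta|$. Since $g(p)\in A$ and $f$ is $s$-flat on $A$, every summand with $j\le s$ vanishes. For the surviving summands ($j\ge s+1$), each $g$-derivative has order at most $|\beta|-(j-1)\le k-s$, hence is defined and continuous by hypothesis on $g$. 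This produces a well-defined jet $\{T_p\}_{p\in A^*}$ of order $k$. Moreover, when $1\le|\beta|\le s$ every term forces $j\le|\beta|\le s$, so $T_p^\beta\equiv 0$; this will immediately deliver the $s$-flatness of $F$ on $A^*$ once the extension is in hand.

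The decisive step is to verify the Whitney compatibility condition
\[
T_p^\beta - \sum_{|\gamma|\le k-|\beta|}\frac{1}{\gamma!}\,T_q^{\beta+\gamma}(p-q)^\gamma = o\bigl(|p-q|^{k-|\beta|}\bigr)
\]
uniformly for $p,q\in A^*$ as $|p-q|\to 0$, for every $|\beta|\le k$. The natural approach is to Taylor-expand $f$ around $g(q)$ at order $k$ (where the $s$-flatness annihilates the first $s$ polynomial layers, leaving an expansion whose terms start at order $s+1$ with the usual $C^k$ remainder), and to Taylor-expand $g$ around $q$ at order $k-s$ (all that the hypothesis on $g$ provides). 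Substituting the latter into the former and regrouping by total order in $(p-q)$ should reproduce $\sum_\gamma \tfrac{1}{\gamma!}T_q^{\beta+\gamma}(p-q)^\gamma$ term by term. The balancing identity that makes this work is precisely the one highlighted above: every surviving summand contains at least $s+1$ factors of $(g(p)-g(q))=O(|p-q|)$ coming from the $f$-expansion, which is exactly enough to absorb the $s$ missing orders of smoothness of $g$ into a remainder of the required size $o(|p-q|^{k-|\beta|})$.

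Once compatibility is established, Whitney's extension theorem supplies $F\in C^k(\mathbb{R}^r)$ with $D^\beta F(p)=T_p^\beta$ for all $p\in A^*$ and $|\beta|\le k$. The case $\beta=0$ gives $F=f\circ g$ on $A^*$, and the identically vanishing layers $T_p^\beta\equiv 0$ for $1\le|\beta|\le s$ give the $s$-flatness of $F$ on $A^*$. The main obstacle throughout is the uniform Whitney estimate: it amounts to an explicit but painstaking combinatorial comparison of two nested Taylor expansions, and the content of the argument lies precisely in checking that the orders of vanishing match on the nose.
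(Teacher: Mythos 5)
Your proposal follows exactly the approach the paper itself sketches and then defers to the references (Abraham--Robbin, Malgrange, Margalef-Roig--Outerelo): define a formal jet for $f\circ g$ on $A^*$ via the chain rule, observe that $s$-flatness of $f$ kills every term with $j\le s$ applications of $Df$ while $j\ge s+1$ forces each $g$-derivative to have order $\le k-s$ (so the jet is well defined and continuous), verify Whitney's compatibility estimate by nesting the order-$k$ Taylor expansion of $f$ (whose first $s$ layers vanish) with the order-$(k-s)$ Taylor expansion of $g$, and invoke Whitney's extension theorem. The paper does not actually prove Theorem~\ref{rough} — it states it and cites the literature — so your sketch is at the same level of detail as the paper's own discussion, with the key numerological balance $(k-s)+s=k$ correctly identified as the crux of the uniform Whitney estimate.
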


This theorem ensures that the composition of two smooth maps will have the same regularity as the second function involved in the composition provided that enough of 
the derivatives of this second function are zero. After a brief examination of the rule for differentiation of composite functions, such a conclusion seems very natural.
Indeed, we can formally compute $D^{\alpha}(f \circ g)(x)$ for all $|\alpha| \leq k$ and $x \in A^*$ since any ``non-existing'' 
derivative $D^{\beta}g(x)$ with $|\beta| > k-s$ is multiplied by a vanishing 
$D^{\gamma} f(g(x))$ term with $|\gamma|=|\alpha|-|\beta| < s$. Thus we can formally set $D^{\gamma}f(g(x)) D^{\beta}g(x)=0$.
However the proof of this theorem is not easy since it is based on the celebrated Whitney extension theorem. That should not be surprising after all. 
The existence of the extension $F$ is proven by verification that the formal jet of derivatives of $f\circ g$ up to order $k$ defined above satisfies the assumptions of the Whitney extension theorem.
 
In 1951, Kneser presented a proof of this composition 
result in \cite{Kneser}. In the same paper, he proved a theorem which may be obtained as an immediate corollary to the theorem of Sard, though he did so without 
any reference to or influence from Sard's result. The composition theorem is also discussed in a different context in a 1958 paper by Glaeser \cite{Glaeser}. 
The reader may find the proofs of this theorem in \cite[Theorem~14.1]{Transversal}, \cite[Chapter~1, Theorem~6.1]{Malgrange}, \cite[Theorem~8.3.1]{DifferentialTopology}.

Thom \cite{Thom}, quickly realized that the method of Kneser can be used to prove the Sard theorem. See also 
\cite{Transversal,Malgrange,Metivier}.
Recently Figalli \cite{Figalli} used this method to provide a simpler proof of Theorem~\ref{dePascale}.
Our proof of Theorem~\ref{mainTheorem} we will also be based on the Kneser-Glaeser result.

\section{Notation and auxiliary results} \label{notation}

In this section we will explain notation and prove some technical results related to the Morrey inequality that will be used in the proof of Theorem~\ref{mainTheorem}.

Consider $f:\bbbr^n \to \bbbr$. By $D^\alpha f$ we will denote the partial derivative of $f$ with respect to the multiindex 
$\alpha=(\alpha_1,\ldots,\alpha_n)$. 
In particular $D^{\delta_i}f=\partial f/\partial x_i$, i.e. $\delta_i=(0,\ldots,0,1,0,\ldots,0)$ is a multiindex with $1$ on $i$th position.
Also $|\alpha|=\alpha_1+\ldots+\alpha_n$ and $\alpha!=\alpha_1!\cdots\alpha_n!$. $D^k f$ will denote the vector
whose components are the derivatives $D^\alpha f$, $|\alpha|=k$. 
The classes of functions with continuous and $\alpha$-H\"older continuous derivatives of order up to $k$ will be denoted by $C^k$ and $C^{k,\alpha}$ respectively.
The integral average over a set $S$ of positive measure will be denoted by
$$
f_S = \barint_S f(x)\, dx = \frac{1}{|S|}\int_S f(x)\, dx.
$$
The characteristic function of a set $E$ will be denoted by $\chi_E$.
The $k$-dimensional Hausdorff measure will be denoted by $\H^k$. 
In particular $\H^0$ is the counting measure.
The Lebesgue measure in $\bbbr^n$ coincides with $\H^n$.
In addition to the Hausdorff measure notation we will also write $|S|$ for the Lebesgue
measure of $S$. 
We say that a set is $k$-null if its $k$-dimensional Hausdorff measure equals zero.
By $\H^k_\delta$, $\delta>0$, we denote the pre-Hausdorff measure defined by taking infimum over coverings of the set
by sets of diameters less than $\delta$ so $\H^k(E)=\lim_{\delta\to 0^+} \H^k_\delta(E)$.
Cubes in $\bbbr^n$ will always have sides parallel to coordinate directions.
The symbol $C$ will be used to represent a generic constant and the actual value of $C$ may change in a single string of estimates. By writing $C=C(n,m)$ 
we indicate that the constant $C$ depends on $n$ and $m$ only.

We will use the following elementary result several times.
\begin{lemma}
\label{L6}
Let $E\subset\bbbr^n$ be a bounded measurable set and let $-\infty<a<n$. Then there is a constant $C=C(n,a)$
such that for every $x\in E$
$$
\int_E \frac{dy}{|x-y|^a} \leq
\begin{cases}
           C|E|^{1-\frac{a}{n}} \quad \text{if $0\leq a<n$.} \\
           (\diam E)^{-a}|E|   \quad \text{if $a<0$}.
\end{cases}
$$
\end{lemma}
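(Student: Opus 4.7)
The plan is to handle the two cases separately; neither is particularly deep, but the standard trick of rearranging mass onto a concentric ball lies at the heart of the $0 \leq a < n$ case.

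For $a < 0$, the estimate is essentially trivial: since $-a > 0$ and $|x-y| \leq \diam E$ for every $y \in E$, the integrand satisfies $|x-y|^{-a} \leq (\diam E)^{-a}$ pointwise, and integrating over $E$ immediately gives $(\diam E)^{-a}|E|$.

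For $0 \leq a < n$, I would choose $r = (|E|/\omega_n)^{1/n}$ (where $\omega_n$ is the volume of the unit ball in $\bbbr^n$), so that $B := B(x,r)$ has the same Lebesgue measure as $E$. Then $|B \setminus E| = |E \setminus B|$, and because $|x-y|^{-a}$ is a radially decreasing function of $y$ (here we use $a \geq 0$), on $B \setminus E$ the integrand dominates $r^{-a}$, while on $E \setminus B$ it is at most $r^{-a}$. Splitting
\begin{align*}
\int_E |x-y|^{-a}\, dy
&= \int_{E \cap B} |x-y|^{-a}\, dy + \int_{E \setminus B} |x-y|^{-a}\, dy \\
&\leq \int_{E \cap B} |x-y|^{-a}\, dy + r^{-a}|E \setminus B| \\
&\leq \int_{E \cap B} |x-y|^{-a}\, dy + \int_{B \setminus E} |x-y|^{-a}\, dy
= \int_B |x-y|^{-a}\, dy,
\end{align*}
reduces the estimate to a computation on the ball.

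The remaining step is the direct polar-coordinates computation
$$
\int_{B(x,r)} |x-y|^{-a}\, dy = n\omega_n \int_0^r t^{n-a-1}\, dt = \frac{n\omega_n}{n-a}\, r^{n-a},
$$
which is finite precisely because $a < n$. Substituting $r = (|E|/\omega_n)^{1/n}$ yields $C(n,a)|E|^{1-a/n}$, as desired. There is no real obstacle here; the only subtle point is recognizing that one cannot simply bound $|x-y|^{-a}$ pointwise when $a \geq 0$, which forces the ball-rearrangement argument.
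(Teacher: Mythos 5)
Your proof is correct and takes essentially the same approach as the paper. The paper's proof of the case $0\le a<n$ simply asserts the rearrangement inequality $\int_E |x-y|^{-a}\,dy \le \int_{B}|y|^{-a}\,dy$ (with $|B|=|E|$) and then computes in polar coordinates; you supply the standard justification of that rearrangement step by splitting into $E\cap B$ and $E\setminus B$ and comparing with $B\setminus E$. The $a<0$ case is handled identically in both.
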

\begin{proof}
The case $a<0$ is obvious since then $|x-y|^{-a}\leq (\diam E)^{-a}$.
Thus assume that $0\leq a < n$. In this case the inequality is actually true for all $x\in\bbbr^n$ and not only
for $x\in E$. Let $B=B(0,r)$, $|B|=|E|$. We have
$$
\int_E\frac{dy}{|x-y|^a} \leq \int_B \frac{dy}{|y|^a} = C\int_0^r t^{-a} t^{n-1}\, dt = 
C r^{n-a} = C |E|^{1-\frac{a}{n}}.
$$
\end{proof}

The following result \cite[Lemma~7.16]{GT} is a basic pointwise estimate for Sobolev functions. 
\begin{lemma}
\label{L1}
Let $D\subset\bbbr^n$ be a cube or a ball and let $S\subset D$ be a measurable set of positive measure. If $f\in W^{1,p}(D)$, $p\geq 1$, then
\begin{equation}
\label{e1}
|f(x)-f_S| \leq C(n)\, \frac{|D|}{|S|} \int_D \frac{|Df(z)|}{|x-z|^{n-1}}\, dz
\quad
\text{a.e.}
\end{equation}
\end{lemma}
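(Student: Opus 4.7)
The plan is to reduce to the smooth case by density and then use a standard radial integration argument. Specifically, since $C^\infty(D) \cap W^{1,p}(D)$ is dense in $W^{1,p}(D)$ (because $D$ is a cube or ball, hence an extension domain), I would first prove the estimate for $f \in C^\infty(D) \cap W^{1,p}(D)$ and then pass to the limit along a subsequence converging almost everywhere. The right-hand side is controlled in $L^1$ by Lemma~\ref{L6} with $a = n-1 < n$, so the passage to the limit is routine.

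For smooth $f$, the key observation is that $D$ is convex, so for every $x, y \in D$ the segment connecting them lies in $D$. Writing $\omega_y = (y-x)/|y-x|$ and using the fundamental theorem of calculus along this segment, I would obtain
$$
|f(y) - f(x)| \leq \int_0^{|y-x|} |Df(x + s\omega_y)|\, ds.
$$
Averaging over $y \in S$ and extending $|Df|$ by zero outside $D$, this gives
$$
|f(x) - f_S| \leq \frac{1}{|S|} \int_S \int_0^{\infty} V(x + s\omega_y)\, ds\, dy,
$$
where $V = |Df|\chi_D$.

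Next I would convert the $y$-integration to polar coordinates centered at $x$, writing $y = x + r\omega$ with $dy = r^{n-1}\, dr\, d\omega$, so that the estimate factors as
$$
|f(x) - f_S| \leq \frac{1}{|S|} \int_{S^{n-1}} \left(\int_0^{\infty} \chi_S(x + r\omega) r^{n-1}\, dr\right)\left(\int_0^{\infty} V(x + s\omega)\, ds\right) d\omega.
$$
Since $S \subset D$ and $\diam D =: d$, the first inner integral is bounded by $d^n/n$. The remaining double integral reassembles, via polar coordinates in reverse, into $\int_D |Df(z)||z-x|^{-(n-1)}\, dz$. Finally, I would use that for a cube or ball $|D|$ is comparable to $d^n$ by a dimensional constant to absorb the factor $d^n$ into $C(n)|D|$, producing the claimed inequality.

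I do not expect any real obstacle here: the only mildly delicate points are (i) justifying the interchange of the order of integration (Fubini applies because $V \geq 0$) and (ii) verifying that the approximation step is compatible with the pointwise ``a.e.''\ conclusion, both of which are standard.
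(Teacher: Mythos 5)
Your argument is correct and is essentially the same one found in the reference the paper cites for this lemma (Gilbarg--Trudinger, Lemma~7.16): reduce to smooth $f$ by density, use convexity of $D$ and the fundamental theorem of calculus along segments, convert to polar coordinates about $x$, and bound the radial factor by $(\diam D)^n/n \leq C(n)|D|$. The factorization of the polar-coordinate integral (using that the $s$-integral depends only on the direction $\omega$, not on $r$) and the $L^1$ control of the Riesz-type potential via Lemma~\ref{L6} to justify the a.e.\ limiting step are both sound.
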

When $p>n,$ the triangle inequality $|f(y)-f(x)|\leq |f(y)-f_D|+|f(x)-f_D|$, H\"older inequality, and Lemma~\ref{L6} applied to the right hand side of \eqref{e1} yield a well known
\begin{lemma}[Morrey's inequality]
\label{L2}
Suppose $n<p<\infty$ and $f\in W^{1,p}(D)$, where $D\subset\bbbr^n$ is a cube or a ball. Then
there is a constant $C=C(n,p)$ such that
$$
|f(y)-f(x)|\leq C (\diam D)^{1-\frac{n}{p}} \left(\int_D |Df(z)|^p\, dz\right)^{1/p}
\quad
\text{for all $x,y\in D$.}
$$
In particular,
$$
\diam f(D) \leq C (\diam D)^{1-\frac{n}{p}}\left(\int_D |Df(z)|^p\, dz\right)^{1/p}.
$$
\end{lemma}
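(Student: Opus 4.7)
The plan is exactly the one sketched in the sentence immediately preceding the statement: combine the pointwise representation of Lemma~\ref{L1} with H\"older's inequality and the integrability estimate of Lemma~\ref{L6}. Specifically, I would apply Lemma~\ref{L1} with $S = D$ to obtain
$$
|f(x) - f_D| \leq C(n) \int_D \frac{|Df(z)|}{|x-z|^{n-1}}\, dz
\quad\text{for a.e. } x \in D,
$$
and then split the integrand as $|Df(z)| \cdot |x-z|^{-(n-1)}$ and apply H\"older's inequality with exponents $p$ and $q = p/(p-1)$.

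The resulting geometric factor is $\bigl(\int_D |x-z|^{-(n-1)q}\, dz\bigr)^{1/q}$ with exponent $a := (n-1)p/(p-1)$. The elementary identity $a < n \iff p > n$ is the exact place where the hypothesis $p > n$ enters; under this hypothesis Lemma~\ref{L6} bounds the integral by $C(n,p)\, |D|^{1-a/n}$. A short computation simplifies $(1-a/n)/q$ to $(p-n)/(np)$, and since $D$ is a cube or a ball we have $|D|^{1/n} \leq C(n)\diam D$, so the geometric factor collapses to $C(\diam D)^{1-n/p}$. Assembling the pieces,
$$
|f(x) - f_D| \leq C(n,p)(\diam D)^{1-n/p}\left(\int_D |Df(z)|^p\, dz\right)^{1/p}
\quad\text{for a.e. } x \in D,
$$
and the triangle inequality $|f(y) - f(x)| \leq |f(y) - f_D| + |f(x) - f_D|$ immediately yields the claimed two-point bound for a.e.\ pair $x, y \in D$. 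The ``in particular'' assertion about $\diam f(D)$ then follows by taking the supremum over $x, y \in D$.

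The only subtlety is the upgrade from an almost-everywhere inequality to one valid for \emph{all} $x, y \in D$. I would handle this by noting that the a.e.\ estimate shows $f$ agrees almost everywhere with a H\"older continuous function on $D$; the standard convention in Sobolev theory is to identify $f$ with this continuous representative, which is also consistent with the precise representative used elsewhere in the paper. There is no genuine obstacle here; the only step deserving care is the arithmetic pinpointing $a < n$, which explains why Morrey's inequality collapses at the borderline $p = n$.
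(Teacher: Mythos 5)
Your proof is correct and follows exactly the route the paper indicates: Lemma~\ref{L1} with $S=D$, H\"older's inequality with exponents $p$ and $p/(p-1)$, Lemma~\ref{L6} with $a=(n-1)p/(p-1)<n$ (equivalent to $p>n$), the triangle inequality through $f_D$, and the continuous representative to pass from a.e.\ to all $x,y\in D$. The arithmetic reducing the geometric factor to $(\diam D)^{1-n/p}$ checks out, so there is nothing to add.
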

Since $p>n$, the function $f$ is continuous (Sobolev embedding) and hence the lemma
does indeed hold for \emph{all} $x,y\in D$.

From this lemma we can easily deduce a corresponding result for higher order derivatives. The Taylor polynomial and the averaged Taylor polynomial of $f$ will be denoted by
$$
T_x^k f(y) = \sum_{|\alpha|\leq k} D^\alpha f(x)\, \frac{(y-x)^\alpha}{\alpha!},
\quad
T_S^k f(y) = \barint_S T_x^kf(y)\, dx.
$$
\begin{lemma}
\label{L3}
Suppose $n<p<\infty$, $k\geq 1$ and $f\in W^{k,p}(D)$, where $D\subset\bbbr^n$ is a cube or a ball. Then
there is a constant $C=C(n,k,p)$ such that
$$
|f(y)- T_x^{k-1} f(y)|\leq C(\diam D)^{k-\frac{n}{p}}\left( \int_D |D^k f(z)|^p\, dz\right)^{1/p}
\quad
\text{for all $x,y\in D$.}
$$
\end{lemma}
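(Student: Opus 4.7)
The plan is to prove Lemma~\ref{L3} by induction on $k$, using Morrey's inequality (Lemma~\ref{L2}) as both the base case and the key step of the inductive argument.

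\textbf{Base case} ($k=1$): Here $T_x^{0}f(y) = f(x)$, so the claim reduces precisely to Morrey's inequality as stated in Lemma~\ref{L2}.

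\textbf{Inductive step}: Assuming the lemma holds for $k-1$, consider $f \in W^{k,p}(D)$ and fix $x \in D$. Since $p > n$ and $f$ has a $C^{k-1,1-n/p}$ representative, the pointwise values $D^\alpha f(x)$ are well-defined for $|\alpha| \leq k-1$. Define
\[
R(y) = f(y) - T_x^{k-1}f(y).
\]
The main computation is to pass the derivative through the Taylor polynomial: a direct index manipulation shows
\[
\partial_i T_x^{k-1}f(y) = \sum_{\substack{|\alpha|\leq k-1 \\ \alpha_i \geq 1}} D^\alpha f(x)\, \frac{(y-x)^{\alpha-\delta_i}}{(\alpha-\delta_i)!} = T_x^{k-2}(\partial_i f)(y),
\]
so $\partial_i R(y) = \partial_i f(y) - T_x^{k-2}(\partial_i f)(y)$. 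Since $\partial_i f \in W^{k-1,p}(D)$, applying the inductive hypothesis yields the pointwise bound
\[
|\partial_i R(y)| \leq C(\diam D)^{k-1-\frac{n}{p}}\left(\int_D |D^k f(z)|^p\, dz\right)^{1/p} \qquad \text{for all } y\in D.
\]

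\textbf{Conclusion of the induction}: Integrating this pointwise bound over $D$ and using $|D|^{1/p} \leq C(\diam D)^{n/p}$,
\[
\|DR\|_{L^p(D)} \leq C(\diam D)^{k-1}\left(\int_D |D^k f(z)|^p\, dz\right)^{1/p}.
\]
Since $R$ is a sum of $f \in W^{k,p}(D) \subset W^{1,p}(D)$ and a polynomial in $y$, we have $R \in W^{1,p}(D)$ and may apply Morrey's inequality. Observing that $R(x) = 0$,
\[
|R(y)| = |R(y) - R(x)| \leq C(\diam D)^{1-\frac{n}{p}} \|DR\|_{L^p(D)} \leq C(\diam D)^{k-\frac{n}{p}}\left(\int_D |D^k f(z)|^p\, dz\right)^{1/p},
\]
which is the desired estimate.

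\textbf{Anticipated obstacle}: The only nontrivial point is verifying the identity $\partial_i T_x^{k-1}f = T_x^{k-2}(\partial_i f)$ so that the induction closes properly, and being precise about which representative of $f$ makes $D^\alpha f(x)$ meaningful at the single point $x$. The former is a short combinatorial manipulation; the latter is handled automatically by the Morrey embedding $W^{k,p} \hookrightarrow C^{k-1,1-n/p}$ used implicitly throughout the paper. Everything else is bookkeeping of exponents of $\diam D$.
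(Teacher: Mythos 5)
Your proof is correct, but it takes a genuinely different route from the paper's. The paper's proof is a one-shot argument: fix $y$ and consider $\psi(x) := T_x^{k-1}f(y)$ as a function of $x$. Observing that $\psi(y) = f(y)$, one computes $\partial_{x_j}\psi$ by the Leibniz rule and exploits a telescoping cancellation that collapses the double sum to a single sum over $|\alpha|=k-1$, yielding $|D_x\psi(z)| \leq C\,|D^k f(z)|\,|y-z|^{k-1}$; one application of Morrey's inequality (Lemma~\ref{L2}) to $\psi$ then finishes. You instead fix $x$, set $R(y) := f(y) - T_x^{k-1}f(y)$, differentiate in $y$ (which is the benign direction, since the coefficients $D^\alpha f(x)$ are constants), and recognize $\partial_i R = \partial_i f - T_x^{k-2}(\partial_i f)$, closing an induction on $k$; the combinatorial identity you need here ($\partial_{y_i}T_x^{k-1}f = T_x^{k-2}(\partial_i f)$) is a simple re-indexing rather than a cancellation. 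In effect you perform one step of the paper's telescoping at each stage of the induction. Both arguments are valid, the bookkeeping of powers of $\diam D$ matches, and neither is meaningfully harder; the paper's version avoids induction, while yours avoids having to notice the cancellation in $\partial_{x_j}\psi$. One mild presentational note: you can avoid the detour through $\|DR\|_{L^p(D)}$ and the $|D|^{1/p}\lesssim(\diam D)^{n/p}$ bound by applying the pointwise inductive estimate directly inside Morrey's integral, but your route is equally rigorous.
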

\begin{proof}
Given $y\in D$ let
$$
\psi(x) := T_x^{k-1}f(y) = \sum_{|\alpha|\leq k-1} D^\alpha f(x)\, \frac{(y-x)^\alpha}{\alpha!}\in W^{1,p}(D).
$$
Observe that $\psi(y)=f(y)$ and
$$
\frac{\partial\psi}{\partial x_j}(x) = \sum_{|\alpha|=k-1} D^{\alpha+\delta_j} f(x)\, \frac{(y-x)^\alpha}{\alpha!}\, ,
$$
where $\delta_j=(0,\ldots,1,\ldots,0)$. Indeed, after applying the Leibniz rule to $\partial\psi/\partial x_j$
the lower order terms will cancel out. Since
$$
|D\psi(z)|\leq C(n,k) |D^k f(z)||y-z|^{k-1},
$$
Lemma~\ref{L2} applied to $\psi$ yields the result.
\end{proof}
Applying the same argument to Lemma~\ref{L1} leads to the following result, see
\cite[Theorem~3.3]{BojHaj}.
\begin{lemma}
\label{L4}
Let $D\subset\bbbr^n$ be a cube or a ball and let $S\subset D$ be a measurable set of positive measure. If $f\in W^{k,p}(D)$, $p\geq 1$, $k\geq 1$, then
there is constant $C=C(n,k)$ such that
\begin{equation}
\label{e2}
|f(x)-T_S^{k-1} f(x)|\leq
C\, \frac{|D|}{|S|} \int_D \frac{|D^k f(z)|}{|x-z|^{n-k}}\, dz
\quad
\text{for a.e. $x\in D$.}
\end{equation}
\end{lemma}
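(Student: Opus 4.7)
The plan is to adapt the argument from the proof of Lemma~\ref{L3}, substituting Lemma~\ref{L1} in place of the Morrey inequality (Lemma~\ref{L2}). The remark just before the lemma statement (``Applying the same argument to Lemma~\ref{L1} leads to the following result'') points exactly in this direction.

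First I would fix $x\in D$ and introduce the auxiliary function
$$
\psi(y) := T_y^{k-1}f(x) = \sum_{|\alpha|\leq k-1} D^\alpha f(y)\,\frac{(x-y)^\alpha}{\alpha!}.
$$
Since each $D^\alpha f$ with $|\alpha|\leq k-1$ lies in $W^{1,p}(D)$, and multiplication by the polynomial $(x-y)^\alpha/\alpha!$ preserves membership in $W^{1,p}(D)$ on the bounded domain, we have $\psi\in W^{1,p}(D)$. Two observations are key: $\psi(x)=f(x)$, because only the $\alpha=0$ term survives when $y=x$; and by the definition of the averaged Taylor polynomial,
$$
\psi_S = \barint_S T_y^{k-1}f(x)\,dy = T_S^{k-1}f(x).
$$
Applying Lemma~\ref{L1} to $\psi$ therefore gives, for almost every $x\in D$,
$$
|f(x)-T_S^{k-1}f(x)| = |\psi(x)-\psi_S| \leq C(n)\,\frac{|D|}{|S|}\int_D \frac{|D\psi(z)|}{|x-z|^{n-1}}\,dz.
$$

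Next I would compute $D\psi$ via the Leibniz rule, exactly as in the proof of Lemma~\ref{L3}. The sum of the terms where the derivative hits $D^\alpha f$ and the sum of those where it hits $(x-y)^\alpha$ telescope against one another, the lower-order contributions cancel, and only the top-order part remains:
$$
\frac{\partial \psi}{\partial y_j}(z) = \sum_{|\alpha|=k-1} D^{\alpha+\delta_j}f(z)\,\frac{(x-z)^\alpha}{\alpha!},
$$
so that $|D\psi(z)| \leq C(n,k)\,|D^k f(z)|\,|x-z|^{k-1}$. Substituting this bound into the previous inequality and cancelling $|x-z|^{k-1}$ against $|x-z|^{n-1}$ yields exactly
$$
|f(x)-T_S^{k-1}f(x)| \leq C(n,k)\,\frac{|D|}{|S|}\int_D \frac{|D^k f(z)|}{|x-z|^{n-k}}\,dz,
$$
which is the desired estimate \eqref{e2}.

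The only step requiring care is the Leibniz-rule cancellation, but this is the same combinatorial identity already used (and sketched) in the proof of Lemma~\ref{L3}; once it is in hand, Lemma~\ref{L1} handles all of the analytic content. The remaining issues are purely bookkeeping: checking that $\psi$ has the claimed Sobolev regularity and that the Lebesgue set of $x$ on which Lemma~\ref{L1} applies to $\psi$ has full measure in $D$.
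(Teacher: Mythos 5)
Your proof is correct and follows exactly the route the paper points to (``applying the same argument to Lemma~\ref{L1}''), with Lemma~\ref{L1} replacing Morrey's inequality in the proof of Lemma~\ref{L3}. The ``bookkeeping'' you defer is real but standard: the exceptional null set in Lemma~\ref{L1} applied to the $x$-dependent function $\psi_x(y)=T_y^{k-1}f(x)$ avoids the point $x$ itself whenever $x$ is a common Lebesgue point of $f$ and of every $D^\alpha f$ with $1\le|\alpha|\le k-1$ (a full-measure set), since then
$$
\barint_{B(x,r)}|\psi_x(y)-f(x)|\,dy \;\le\; \barint_{B(x,r)}|f(y)-f(x)|\,dy \;+\; \sum_{1\le|\alpha|\le k-1}\frac{r^{|\alpha|}}{\alpha!}\,\barint_{B(x,r)}|D^\alpha f(y)|\,dy \;\longrightarrow\; 0
$$
as $r\to 0$, so $x$ is a Lebesgue point of $\psi_x$ and Lemma~\ref{L1} does apply at $x$.
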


In the next result we will improve the above estimates under the additional assumption that the derivative $Df$ vanishes on a given subset of $D$.
For a similar result in a different setting see \cite[Proposition~2.3]{HMz}.
\begin{lemma}
\label{L5}
Let $D\subset\bbbr^n$ be a cube or a ball and let $f\in W^{k,p}(D)$, $n<p<\infty$, $k\geq 1$. Let
$$
A=\{ x\in D|\, Df(x)=0\}.
$$
Then for any $\eps>0$ there is $\delta=\delta(n,k,p,\eps)>0$ such that if
$$
\frac{|D\setminus A|}{|D|} <\delta,
$$
then
$$
\diam f(D) \leq \eps (\diam D)^{k-\frac{n}{p}}\left(\int_D |D^k f(z)|^p\, dz\right)^{1/p}\, .
$$
\end{lemma}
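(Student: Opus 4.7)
The strategy is to apply Lemma~\ref{L4} with $S = A$ and to exploit a chain-rule observation that forces the averaged Taylor polynomial $T_A^{k-1}f$ to collapse to the constant $f_A$. Once that collapse happens, H\"older's inequality and Lemma~\ref{L6} produce the desired bound, with the smallness of $|D\setminus A|$ supplying an arbitrarily small prefactor $\eps$.

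The key observation is that $D^{j}f = 0$ almost everywhere on $A$ for every $1 \leq j \leq k$, not only for $j=1$. By Stampacchia's chain rule, if $g\in W^{1,1}_{\loc}$ vanishes on a measurable set $E$, then $Dg = 0$ almost everywhere on $E$. Applying this inductively with $g = D^{j-1}f \in W^{1,p}$, starting from the base case $j=1$ (which is the definition of $A$), gives the claim. Consequently, for almost every $y\in A$ every non-constant term in the Taylor polynomial $T_y^{k-1}f(x)$ vanishes, so $T_y^{k-1}f(x) = f(y)$ and averaging in $y$ over $A$ yields $T_A^{k-1}f(x) = f_A$. Lemma~\ref{L4} with $S = A$ (which has positive measure provided $\delta < 1$) then gives, for a.e. $x\in D$,
\begin{equation*}
|f(x) - f_A| \leq C\,\frac{|D|}{|A|} \int_{D\setminus A} \frac{|D^k f(z)|}{|x-z|^{n-k}}\, dz,
\end{equation*}
where the integration has been restricted to $D\setminus A$ using $D^k f = 0$ a.e. on $A$.

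The right-hand integral is estimated by H\"older's inequality with exponents $p$ and $p/(p-1)$, followed by Lemma~\ref{L6} applied to the kernel. The exponent $a = (n-k)p/(p-1)$ satisfies $a<n$ exactly when $p>n/k$, which holds since $p>n$ and $k\geq 1$; the first case of Lemma~\ref{L6} applies whenever $k\leq n$, while the second case (with $-a > 0$) handles the range $k>n$. In both regimes, using $|D|\sim (\diam D)^n$, $|D\setminus A|\leq \delta|D|$, and $|D|/|A|\leq (1-\delta)^{-1}$, one arrives at
\begin{equation*}
|f(x)-f_A| \leq \frac{C}{1-\delta}\, \delta^{\beta}\, (\diam D)^{k-n/p}\left(\int_D |D^k f(z)|^p\, dz\right)^{1/p}
\end{equation*}
with $\beta := \min(k/n, 1) - 1/p > 0$. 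Choosing $\delta$ small enough that $2C(1-\delta)^{-1}\delta^\beta \leq \eps$ and using $\diam f(D) \leq 2\sup_{x\in D}|f(x)-f_A|$ (valid because $f$ is continuous for $p>n$) completes the proof.

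The principal obstacle is the Taylor polynomial $T_A^{k-1}f(x)$, which a priori is a polynomial of degree $k-1$ in $x$ whose coefficients are averages over $A$ of derivatives $D^\alpha f$ with $1\leq |\alpha|\leq k-1$. These coefficients are not controlled by $\|D^k f\|_p$ alone, so Lemma~\ref{L4} by itself yields a bound too weak to compare with $\diam f(D)$. The inductive Stampacchia argument is exactly what annihilates every non-constant term and makes the estimate sharp in the power of $\diam D$.
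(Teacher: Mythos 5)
Your proof is correct and follows essentially the same route as the paper's: establish that all derivatives $D^\alpha f$ with $1\le|\alpha|\le k$ vanish a.e.\ on $A$ (you cite Stampacchia's chain rule, the paper cites \cite[Lemma~7.7]{GT} — the same fact), deduce $T_A^{k-1}f\equiv f_A$, then apply Lemma~\ref{L4} with $S=A$, restrict the Riesz potential to $D\setminus A$, and finish via H\"older and Lemma~\ref{L6}, choosing $\delta$ small enough that $\delta^{\beta}$ with $\beta=\min(k/n,1)-1/p>0$ beats the fixed constants. The only cosmetic difference is that you package the two cases $k\le n$ and $k>n$ into a single exponent $\beta$, whereas the paper treats them separately.
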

\begin{remark}
\label{Rem1}
It is important that $\delta$ does not depend of $f$. The result applies very well to density points of $A$. 
Indeed, it follows immediately that if $x\in A$ is a density point, then for any $\eps>0$ there is $r_x>0$
such that
$$
\diam f(B(x,r_x)) \leq \eps r_x^{k-\frac{n}{p}}
\left(\int_{B(x,r_x)} |D^k f(z)|^p\, dz\right)^{1/p}\, .
$$
\end{remark}
\begin{proof}[Proof of Lemma~\ref{L5}]
Although only the first order derivatives of $f$ are equal zero in $A$, it easily follows that
$D^\alpha f=0$ a.e. in $A$ for all $1\leq |\alpha|\leq k$. Indeed, if a Sobolev function is constant in a set, its derivative equals zero
a.e. in the set, \cite[Lemma~7.7]{GT}, and we apply induction. Hence
$$
T_A^{k-1}f(x) = f_A
\quad
\text{for all $x\in\bbbr^n$.}
$$
Let $\eps > 0$. 
Choose $0<\delta<1/2$ with $\max \left\{ \delta^{\frac{k}{n}-\frac{1}{p}}, \delta^{1-\frac{1}{p}} \right\} < \varepsilon$. Since $\delta<1/2$, $|D|/|A|<2$. 
Thus Lemma~\ref{L4} with $S=A$ yields
$$
|f(x)-f_A|\leq 
C(n) \int_{D\setminus A} \frac{|D^k f(z)|}{|x-z|^{n-k}}\, dz \leq
C(n) \Vert D^k f\Vert_{L^p(D)}
\left( \int_{D\setminus A} \frac{dz}{|x-z|^{(n-k)\frac{p}{p-1}}}\right)^{\frac{p-1}{p}}\, .
$$
Now the result follows directly from Lemma~\ref{L6}. Indeed, if $k\leq n$, Lemma~\ref{L6} and the estimate
$$
|D\setminus A|< \delta |D|\leq C(n) \delta (\diam D)^n
$$
yield
$$
\left( \int_{D\setminus A} \frac{dz}{|x-z|^{(n-k)\frac{p}{p-1}}}\right)^{\frac{p-1}{p}} \leq
C(n,k,p) |D\setminus A|^{\frac{1}{n}(k-\frac{n}{p})} \leq
C(n,k,p) \delta^{\frac{k}{n}-\frac{1}{p}} (\diam D)^{k-\frac{n}{p}}.
$$
If $k>n$, then we have
$$
\left( \int_{D\setminus A} \frac{dz}{|x-z|^{(n-k)\frac{p}{p-1}}}\right)^{\frac{p-1}{p}} \leq
(\diam D)^{k-n} |D\setminus A|^{\frac{p-1}{p}} \leq 
C(n,p) \delta^{1-\frac{1}{p}} (\diam D)^{k-\frac{n}{p}}.
$$
Hence
$$
\diam f(D)=\sup_{x,y\in D} |f(x)-f(y)|\leq 2\sup_{x\in D}|f(x)-f_A| \leq
C(n,k,p)\eps(\diam D)^{k-\frac{n}{p}}\Vert D^k f\Vert_{L^p(D)}.
$$

The proof is complete.
\end{proof}

We will also need the following classical Besicovitch covering lemma, see e.g. \cite[Theorem~1.3.5]{ziemer}
\begin{lemma}[Besicovitch]
\label{L7}
Let $E\subset\bbbr^n$ and let $\{B_x\}_{x\in E}$ be a family of closed balls $B_x=\overline{B}(x,r_x)$ so that
$\sup_{x\in E}\{ r_x\}<\infty$. Then there is a countable (possibly finite) subfamily $\{ B_{x_i}\}_{i=1}^\infty$
with the property that
$$
E\subset\bigcup_{i=1}^\infty B_{x_i}
$$
and no point of $\bbbr^n$ belongs to more than $C(n)$ balls.
\end{lemma}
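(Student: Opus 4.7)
The plan is a greedy selection followed by a geometric packing argument, split into reduction, selection, and overlap estimation.

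\emph{Reduction to bounded $E$.} Set $R = \sup_{x \in E} r_x < \infty$ and partition $\bbbr^n$ into concentric annular shells $S_j = \{x : 3jR \le |x| < 3(j+1)R\}$. Any ball $B_x$ with $x \in S_j$ lies in the $R$-neighborhood of $S_j$, so balls centered in shells $S_j, S_{j'}$ with $|j-j'| \ge 2$ cannot meet. Applying the bounded case separately to $\bigcup_{j \text{ even}} (E \cap S_j)$ and to $\bigcup_{j \text{ odd}} (E \cap S_j)$ and taking the union of the two resulting subfamilies at most doubles the overlap constant.

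\emph{Greedy selection.} For bounded $E$, put $R_0 = \sup_{x \in E} r_x$ and pick $x_1 \in E$ with $r_{x_1} > \tfrac{3}{4} R_0$. Inductively, having chosen $x_1,\dots,x_i$, let $E_i = E \setminus \bigcup_{j \le i} B_{x_j}$ and $R_i = \sup_{x \in E_i} r_x$, and pick $x_{i+1} \in E_i$ with $r_{x_{i+1}} > \tfrac{3}{4} R_i$, terminating if $E_i = \emptyset$. Three properties follow directly: (a) $|x_i - x_j| > r_{x_i}$ whenever $i<j$, since $x_j \in E_i$; (b) $R_i$ is non-increasing in $i$; and (c) $r_{x_j} \le R_{j-1} \le R_{i-1} < \tfrac{4}{3} r_{x_i}$ whenever $i<j$. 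From (a) and (c) a short calculation shows that the balls $B(x_i, \tfrac{1}{5} r_{x_i})$ are pairwise disjoint, so the boundedness of $E$ together with finiteness of Lebesgue measure forces either termination or $r_{x_i} \to 0$. In either case $E \subset \bigcup_i B_{x_i}$, for any uncovered $x \in E$ would satisfy $0 < r_x \le R_i$ for every $i$, contradicting $R_i \to 0$.

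\emph{Bounded overlap.} This is the main obstacle. Fix $z \in \bbbr^n$ and enumerate $\{i : z \in B_{x_i}\}$ as $i_1 < \cdots < i_N$. Writing $y_k = x_{i_k} - z$ and $\rho_k = r_{x_{i_k}}$, the three properties above translate into $|y_k| \le \rho_k$, $|y_k - y_l| > \rho_k$ for $k<l$, and $\rho_l < \tfrac{4}{3}\rho_k$ for $k<l$; in particular $|y_k - y_l| > \tfrac{3}{4} \max(\rho_k,\rho_l)$ for all $k\ne l$. From here I would invoke the classical angular packing argument: partition $S^{n-1}$ into $N(n)$ spherical caps of sufficiently small angular diameter $\alpha_0(n)$, and show via the law of cosines that if $y_k/|y_k|$ and $y_l/|y_l|$ both lie in a single cap, then the angle $\angle y_k 0 y_l$ is too small to be compatible with $|y_k| \le \rho_k$, $|y_l| \le \rho_l$, and $|y_k - y_l| > \tfrac{3}{4}\max(\rho_k,\rho_l)$ except for a uniformly bounded number of indices per cap. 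Summing over the $N(n)$ caps gives $N \le C(n)$, completing the proof.
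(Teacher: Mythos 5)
The paper offers no proof of this lemma at all---it is quoted as a classical fact with a pointer to \cite[Theorem~1.3.5]{ziemer}---so there is no in-paper argument to compare yours against; what you have written is the standard textbook proof skeleton. The reduction to bounded $E$ via separated shells, the $\tfrac34$-maximal greedy selection, properties (a)--(c), the disjointness of the balls $B(x_i,\tfrac15 r_{x_i})$, and the deduction that $E$ is covered are all correct as stated.

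The gap is in the bounded-overlap step, which is the entire content of the theorem. You propose to run the angular packing argument using only the symmetrized data $|y_k|\le\rho_k$, $|y_l|\le\rho_l$, $|y_k-y_l|>\tfrac34\max(\rho_k,\rho_l)$. These conditions do \emph{not} give a uniformly bounded number of indices per cap: the collinear configuration $y_j=5^j e_1$, $\rho_j=5^j$ satisfies all three for arbitrarily many $j$, and every direction $y_j/|y_j|$ lies in the same cap. What rescues the argument is exactly the asymmetric information you discarded when passing to the $\max$: for $k<l$ you know $|y_k-y_l|>\rho_k\ge |y_k|$ \emph{and} $|y_l|\le\rho_l<\tfrac43\rho_k$. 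With these one checks---first for exactly collinear points, then perturbatively via the law of cosines for caps of small enough aperture---that a single cap admits at most two of the directions $y_k/|y_k|$: a putative third point is forced either outside $\overline{B}(0,\tfrac43\rho_k)$ or inside $\overline{B}(y_k,\rho_k)$, both impossible. (One must also set aside the at most one index with $y_k=0$, whose direction is undefined.) This case analysis is precisely where the proofs in \cite{ziemer} and \cite{Evans} spend their effort, so ``invoke the classical angular packing argument'' leaves the crux unproved, and with the hypotheses you actually carried into that step the claimed per-cap bound is false. The proof is repairable, but only by retaining the ordered inequalities $|y_k-y_l|>\rho_k$ and $\rho_l<\tfrac43\rho_k$ for $k<l$ throughout the final computation.
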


\section{Proof of Theorem \ref{mainTheorem}}
\label{sec:mainproof}

As we pointed out in Introduction we may assume that $m\leq n$ and $k\geq 2$. It is also easy to see that we can assume that $\Omega=\bbbr^n$ and
$f\in W^{k,p}(\bbbr^n,\bbbr^m)$. Indeed, it suffices to prove the claim of Theorem~\ref{mainTheorem} on compact subsets of $\Omega$ and so we may multiply $f$
by a compactly supported smooth cut-off function to get a function in $W^{k,p}(\bbbr^n,\bbbr^m)$.

We will prove the result using induction with respect to $n$.

If $n=1$, then $m=n=1$.
This gives $n-m-k+1 = 1-k \leq 0$ for any $k \in \bbbn$, so $\ell = 0$.
Thus the theorem is a direct consequence of Theorem~\ref{coarea}.

We shall prove now the theorem for $n\geq 2$ assuming that it is true in dimensions less than or equal to $n-1$.

Fix $p$ and integers $m$ and $k$ satisfying $n<p<\infty$, $m\leq n$, and $k \geq 2$. Write $\ell = \max(n-m-k+1,0)$.
Let $f \in W^{k,p}(\bbbr^n,\bbbr^m)$.

We can write
$$
C_f = K \cup A_1 \cup \dots \cup A_{k-1},
$$
where
$$
K := \{x \in C_f \, | \, 0< \text{rank} \, Df(x) < m \}
$$
and
$$
A_{s} := \{ x \in \bbbr^n \, | \, D^\alpha f(x)=0 \; \text{for all} \; 1 \leq |\alpha| \leq s \}
$$
Note that $A_1\supset A_2\supset\ldots\supset A_{k-1}$
is a decreasing sequence of sets.

In the first step, we will show that $A_{k-1} \cap f^{-1}(y)$ is $\ell$-null for a.e. $y \in \bbbr^m$.
Then we will prove the same for $(A_{s-1}\setminus A_s)\cap f^{-1}(y)$ for $s=2,3,\ldots,k-1$. To do this
we will use the Implicit Function and Kneser-Glaeser theorems to reduce our problem to a lower dimensional one 
and apply the induction hypothesis. Finally, we will consider the set $K$ and use a change of variables to show 
that we can reduce the dimension in the domain and in the target 
so that the fact that $\H^\ell(K\cap f^{-1}(y))=0$ will follow
from the induction hypothesis.

\begin{claim} 
\label{claim1}
$\mathcal{H}^{\ell}(A_{k-1} \cap f^{-1}(y))=0$ for a.e. $y \in \mathbb{R}^m$.
\end{claim}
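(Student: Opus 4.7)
For any $x\in A_{k-1}$ every derivative $D^\alpha f$ with $1\leq|\alpha|\leq k-1$ vanishes at $x$, so the Taylor polynomial collapses to a constant, $T_x^{k-1}f(y)=f(x)$. Applying Lemma~\ref{L3} component-wise then yields the basic pointwise bound
\[
|f(y)-f(x)|\leq C\,r^{k-\frac{n}{p}}\|D^kf\|_{L^p(B(x,r))}\qquad\text{for every }y\in B(x,r).
\]
Moreover $A_{k-1}\subset A_1=\{Df=0\}$, so every density point of $A_{k-1}$ is also a density point of $A_1$, and at such a point Remark~\ref{Rem1} upgrades this to
\[
\diam f(B(x,r_x))\leq C\varepsilon\, r_x^{k-\frac{n}{p}}\|D^kf\|_{L^p(B(x,r_x))}
\]
for any prescribed $\varepsilon>0$ (the radius $r_x$ depending on $\varepsilon$). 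My strategy is to cover $A_{k-1}$ by Besicovitch balls carrying such estimates, write $\H^\ell_{\delta'}(A_{k-1}\cap f^{-1}(y))\leq C\sum_i r_i^\ell\chi_{f(B_i)}(y)$ since each piece has diameter at most $2r_i$, integrate in $y$ using $|f(B_i)|\leq C(\diam f(B_i))^m$, and close by H\"older.

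I would first handle the density points. After reducing as in Section~\ref{sec:mainproof} to compactly supported $f\in W^{k,p}(\bbbr^n,\bbbr^m)$, let $\widetilde{A}_{k-1}\subset A_{k-1}$ denote the set of density points (whose complement in $A_{k-1}$ has Lebesgue measure zero). Fix $\varepsilon,\delta'>0$, choose $r_x<\delta'/2$ at each $x\in\widetilde{A}_{k-1}$ realizing the sharp estimate above, and use Lemma~\ref{L7} to extract a subcover $\{B_i\}$ of bounded multiplicity $C(n)$. Assembling the pieces gives
\[
\int_{\bbbr^m}\H^\ell_{\delta'}(\widetilde{A}_{k-1}\cap f^{-1}(y))\,dy\leq C\varepsilon^m\sum_i r_i^{\alpha}\|D^kf\|_{L^p(B_i)}^m,\qquad \alpha=\ell+m\bigl(k-\tfrac{n}{p}\bigr).
\]
H\"older's inequality with exponents $p/m$ and $p/(p-m)$ (valid since $p>n\geq m$), together with bounded overlap to control $\sum\|D^kf\|_{L^p(B_i)}^p\leq C\|D^kf\|_{L^p(\bbbr^n)}^p$ and $\sum r_i^n$ by the volume of a fixed cube containing the support, leaves the single algebraic condition $\alpha p/(p-m)\geq n$. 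This rearranges to $\ell+mk\geq n$, which is an immediate consequence of $\ell=\max(n-m-k+1,0)$ by a short case check. Hence the right-hand side is $\leq C\varepsilon^m$; arbitrariness of $\varepsilon$ kills the integral, and monotone convergence as $\delta'\to 0$ yields $\H^\ell(\widetilde{A}_{k-1}\cap f^{-1}(y))=0$ for a.e.\ $y$.

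The remainder $N=A_{k-1}\setminus\widetilde{A}_{k-1}$ has Lebesgue measure zero but can still carry positive $\H^\ell$-mass, so it needs separate care. For arbitrary $\eta>0$ I enclose $N$ in an open set $U$ with $|U|<\eta$, select $r_x<\delta'/2$ with $B(x,r_x)\subset U$ for each $x\in N$, and rerun the same Besicovitch--H\"older scheme using now only the crude bound from Lemma~\ref{L3} (no $\varepsilon$). Because every selected ball lies inside $U$, bounded overlap upgrades the sums to $\sum\|D^kf\|_{L^p(B_i)}^p\leq C\|D^kf\|_{L^p(U)}^p$ and $\sum r_i^n\leq C\eta$, so the final estimate becomes
\[
\int_{\bbbr^m}\H^\ell_{\delta'}(N\cap f^{-1}(y))\,dy\leq C\|D^kf\|_{L^p(U)}^m\,\eta^{(p-m)/p},
\]
which tends to $0$ as $\eta\to 0$ by absolute continuity of the $L^p$ integral of $|D^kf|^p$. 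Combining the two contributions and letting $\delta'\to 0$ finishes the claim.

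The hard part will be the exponent bookkeeping in the H\"older step: the inequality $\ell+mk\geq n$ degenerates to equality precisely when $m=1$, and in that borderline case neither $\sum r_i^{\alpha p/(p-m)}$ nor $\sum\|D^kf\|_{L^p(B_i)}^p$ automatically shrinks under the Besicovitch cover. It is exactly the $\varepsilon$-gain of Remark~\ref{Rem1} (for density points) and absolute continuity of $|D^kf|^p$ (for the Lebesgue-null remainder) that recover the needed margin and push the estimate to zero.
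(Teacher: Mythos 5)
Your proof is correct and takes essentially the same approach as the paper: split $A_{k-1}$ into density points and the Lebesgue-null remainder, cover with small balls/cubes carrying the Morrey-type estimate (sharpened by Remark~\ref{Rem1} at density points), bound $\H^\ell_{\delta'}(\cdot\cap f^{-1}(y))$ pointwise by $\sum_i r_i^\ell\chi_{f(B_i)}(y)$, integrate in $y$, and close with H\"older using $\ell+mk\geq n$. The only detail the paper is more careful about is that $y\mapsto\H^\ell_{\delta'}(\cdot\cap f^{-1}(y))$ need not be Lebesgue measurable, so one should use the upper integral (or argue via the measurable majorant $\phi(y)=C\sum_i r_i^\ell\chi_{f(B_i)}(y)$ and $L^1$-convergence along a subsequence) before concluding vanishing a.e.; your argument is otherwise the same, with the cosmetic differences of unifying the $\ell=0$ and $\ell>0$ cases and using Besicovitch balls for the null part where the paper uses dyadic cubes.
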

\begin{proof}
Suppose $x \in A_{k-1}$.
Notice that $T_x^{k-1} f(y) = f(x)$ for any $y \in \bbbr^n$ since $D^{\alpha} f(x) = 0$
for every $1 \leq |\alpha| \leq k-1$. By Lemma~\ref{L3} applied
to each coordinate of $f = (f_1, \dots , f_m)$, we have for any cube 
$Q \subset \bbbr^n$ containing $x$ and any $y \in Q$,
\begin{equation} 
\label{morrey}
|f(y)-f(x)| \leq C (\diam Q)^{k-\frac{n}{p}} \left( \int_{Q} |D^kf (z)|^p \, dz \right)^{1/p}.
\end{equation}
Hence
\begin{equation}
\label{morrey1}
\diam f(Q) \leq C (\diam Q)^{k-\frac{n}{p}} \left( \int_{Q} |D^kf (z)|^p \, dz \right)^{1/p}.
\end{equation}

Let 
$F_1 := \{ x \in A_{k-1} \, | \, x \text{ is a density point of } A_{k-1} \}$ and
$F_2 := A_{k-1} \setminus F_1$. We will treat the sets $F_1 \cap f^{-1}(y)$ and $F_2 \cap f^{-1}(y)$
separately.

\noindent
{\bf Step 1.}
First we will prove that $\mathcal{H}^{\ell}(F_2 \cap f^{-1}(y))=0$ for almost every $y \in \mathbb{R}^m$.

Let $0<\eps<1$. Since $\H^n(F_2)=0$, there is an open set $F_2\subset U\subset\Omega$ such that
$\H^n(U)<\eps^{\frac{p}{p-m}}$.
For any $j\geq 1$ let $\{ Q_{ij}\}_{i=1}^\infty$ be a collection of closed cubes with pairwise disjoint interiors such that
$$
Q_{ij}\cap F_2\neq \emptyset,
\quad
F_2\subset\bigcup_{i=1}^\infty Q_{ij}\subset U,
\quad
\diam Q_{ij}<\frac{1}{j}.
$$
Since $F_2\cap Q_{ij}\neq\emptyset$, \eqref{morrey1} yields
$$
\H^m(f(Q_{ij})) \leq C (\diam f(Q_{ij}))^m \leq
C(\diam Q_{ij})^{m(k-\frac{n}{p})}\left(\int_{Q_{ij}} |D^k f(x)|^p \, dx\right)^{m/p}.
$$

\noindent
{\bf Case:} $n-m-k+1\leq 0$ so $\ell=0$.

This condition easily implies that $mk\geq n$ so we also have
$\frac{mp}{p-m}(k-\frac{n}{p}) \geq n$, and by H\"{o}lder's inequality,
\begin{eqnarray}
\H^m(f(F_2))
& \leq &
\sum_{i=1}^\infty \H^m(f(Q_{ij})) 
\leq
C\sum_{i=1}^\infty (\diam Q_{ij})^{m(k-\frac{n}{p})}\left(\int_{Q_{ij}} |D^k f(x)|^p\, dx\right)^{m/p} \nonumber \\
&\leq &
C \left( \sum_{i=1}^{\infty} (\diam Q_{ij})^{\frac{pm}{p-m}(k-\frac{n}{p})} \right)^{\frac{p-m}{p}} \left( \int_{\bigcup_{i=1}^{\infty} Q_{ij}} |D^k f(x)|^p \, dx\right)^{m/p} \nonumber \\
& \leq &
C\H^n(U)^{\frac{p-m}{p}}\left(\int_U |D^kf(x)|^p\, dx\right)^{m/p}<C\eps\Vert D^k f\Vert_p.
\label{square}
\end{eqnarray}
Since $\eps>0$ can be arbitrarily small, $\H^m(f(F_2))=0$ and hence $F_2\cap f^{-1}(y)=\emptyset$, i.e. $\H^\ell(F_2\cap f^{-1}(y))=0$ 
for a.e. $y\in\bbbr^m$.

\noindent
{\bf Case:} $\ell=n-m-k+1>0$.

The sets $\{ Q_{ij}\cap f^{-1}(y)\}_{i=1}^\infty$ form a covering of $F_2\cap f^{-1}(y)$ by sets of diameters less than $1/j$. Since
$$
\diam (Q_{ij}\cap f^{-1}(y)) \leq (\diam Q_{ij}) \chi_{f(Q_{ij})}(y)
$$
the definition of the Hausdorff measure yields

\begin{eqnarray}
\label{Natalia2}
\H^{\ell}(F_2\cap f^{-1}(y))
& \leq & 
C \liminf_{j\to\infty} \sum_{i=1}^\infty \diam (Q_{ij} \cap f^{-1}(y))^{\ell} \\ \nonumber
& \leq &
C \liminf_{j\to\infty} \sum_{i=1}^\infty (\diam Q_{ij})^{\ell} \chi_{f(Q_{ij})}(y).
\end{eqnarray}
We would like to integrate both sides with respect to $y\in\bbbr^m$. Note that the function on the right hand side is measurable since the 
sets $f(Q_{ij})$ are compact. However measurability of the function $y\mapsto \H^{\ell}(F_2\cap f^{-1}(y))$ is far from being
obvious. To deal with this problem we will use the {\em upper integral} which for a non-negative function $g:X\to [0,\infty]$
defined $\mu$-a.e. on a measure space $(X,\mu)$ is defined as follows:
$$
\int_X^* g\, d\mu =
\inf\left\{ \int_X \phi\, d\mu:\, \text{$0\leq g\leq\phi$ and $\phi$ is $\mu$-measurable.}\right\}\, .
$$
An important property of the upper integral is that if $\int_X^*g\, d\mu=0$, then $g=0$ $\mu$-a.e.
Indeed, there is a sequence $\phi_i \geq g \geq 0$ such that $\int_X\phi_i\, d\mu\to 0$.
That means $\phi_i\to 0$ in $L^1(\mu)$. Taking a subsequence we get $\phi_{i_j}\to 0$ $\mu$-a.e. which proves that $g=0$ $\mu$-a.e.

Applying the upper integral with respect to $y\in\bbbr^m$ to both sides of \eqref{Natalia2}, using Fatou's lemma, and noticing that
$$
\frac{p}{p-m} \left(\ell + m \left(k-\frac{n}{p} \right) \right) \geq n
$$
gives
\begin{eqnarray*}
\lefteqn{\int_{\bbbr^m}^* \H^{\ell} (F_2\cap f^{-1}(y))\, d\H^m(y)
\leq 
C\liminf_{j\to\infty} \sum_{i=1}^\infty (\diam Q_{ij})^{\ell} \H^m(f(Q_{ij}))}\\ 
& \leq &
C\liminf_{j\to\infty} \sum_{i=1}^\infty (\diam Q_{ij})^{\ell + m (k-\frac{n}{p})} \left(\int_{Q_{ij}} |D^k f(x)|^p \, dx\right)^{m/p} < C\eps\Vert D^k f\Vert_p
\end{eqnarray*}
by the same argument as in \eqref{square}. 
Again, since $\eps>0$ can be arbitrarily small, we conclude that 
$\H^\ell(F_2\cap f^{-1}(y))=0$ for a.e. $y\in\bbbr^m$.

\noindent
{\bf Step 2.} It remains to prove that $\mathcal{H}^{\ell}(F_1 \cap f^{-1}(y))=0$ for almost every $y \in \mathbb{R}^m$.

The proof is similar to that in Step~1 and the arguments which are almost the same will be presented in a more sketchy form now.
In Step~1 it was essential that the set $F_2$ had measure zero. We will compensate the lack of this property now by the estimates from
Remark~\ref{Rem1}. 

It suffices to prove that for any cube $\tilde{Q}$, $\H^\ell(\tilde{Q}\cap F_1\cap f^{-1}(y))=0$ for a.e. $y\in\bbbr^m$.
Assume that $\tilde{Q}$ is in the interior of a larger cube $\tilde{Q}\Subset Q$.

By Remark~\ref{Rem1}, for each $x\in \tilde{Q}\cap F_1$ and $j\in\bbbn$ there is $0<r_{j x}<1/j$ such that
$$
\diam f(B(x,r_{j x})) 
\leq
j^{-1} r_{j x}^{k-\frac{n}{p}} \left( \int_{B(x,r_{j x})} |D^k f(z)|^p\, dz\right)^{1/p}.
$$
We may further assume that $B(x,r_{jx})\subset Q$.

Denote $B_{jx}=\overline{B}(x,r_{jx})$.
According to the Besicovitch Lemma~\ref{L7}, there is a countable subcovering
$\{ B_{j x_i}\}_{i=1}^\infty$ of $\tilde{Q}\cap F_1$ so that no point of $\bbbr^n$ belongs to more than $C(n)$ balls $B_{j x_i}$. 

\noindent
{\bf Case:} $n-m-k+1\leq 0$ so $\ell=0$.

We have $\frac{pm}{p-m}(k-\frac{n}{p}) \geq n$ as before, so
\begin{eqnarray*}
\H^m(f(\tilde{Q}\cap F_1))
& \leq &
C \sum_{i=1}^{\infty} \H^m(f(B_{jx_i}))
\leq
C j^{-m}\sum_{i=1}^\infty r_{jx_i}^{m(k-\frac{n}{p})} \left(\int_{B_{jx_i}} |D^k f(z)|^p\, dz\right)^{m/p} \\
& \leq &
C j^{-m}\left(\sum_{i=1}^\infty r_{jx_i}^n\right)^{\frac{p-m}{p}}\left(\sum_{i=1}^\infty \int_{B_{jx_i}} |D^k f(z)|^p\, dz\right)^{m/p}\, .
\end{eqnarray*}
Since the balls are contained in $Q$ and no point belongs to more than $C(n)$ balls we conclude that
$$
\H^m(f(\tilde{Q}\cap F_1))\leq C j^{-m} \H^n(Q)^{\frac{p-m}{p}}\Vert D^k f\Vert_p^m.
$$
Since $j$ can be arbitrarily large, $\H^m(f(\tilde{Q}\cap F_1))=0$, i.e.  
$\H^\ell(\tilde{Q}\cap F_1\cap f^{-1}(y))=0$ for a.e. $y\in\bbbr^m$.

\noindent
{\bf Case:} $\ell=n-m-k+1>0$.

The sets
$\{ B_{j x_i}\cap f^{-1}(y)\}_{i=1}^\infty$ form a covering of $\tilde{Q}\cap F_1\cap f^{-1}(y)$ and
$$
\diam (B_{j x_i}\cap f^{-1}(y)) \leq
C r_{j x_i} \chi_{f(B_{j x_i})}(y).
$$
The definition of the Hausdorff measure yields
$$
\H^{\ell} (\tilde{Q}\cap F_1\cap f^{-1}(y)) \leq
C\liminf_{j\to\infty} \sum_{i=1}^\infty r_{j x_i}^{\ell} \chi_{f(B_{j x_i})}(y).
$$
Thus as above
\begin{eqnarray*}
\lefteqn{\int_{\bbbr^m}^* \H^{\ell} (\tilde{Q}\cap F_1\cap f^{-1}(y))\, d\H^m(y)} \\
& \leq &
C \liminf_{j\to\infty} \sum_{i=1}^\infty r_{j x_i}^{\ell} \H^m(f(B_{j x_i})) \\
& \leq &
C \liminf_{j\to\infty} j^{-m} \sum_{i=1}^\infty
r_{j x_i}^{\ell + m(k-\frac{n}{p})} \left( \int_{B_{j x_i}} |D^k f(z)|^p\, dz\right)^{m/p} \\
& \leq &
C \liminf_{j\to\infty} j^{-m} \H^n(Q)^{\frac{p-m}{p}}\Vert D^k f\Vert_p^m = 0
\end{eqnarray*}
since $\frac{p}{p-m} \left(\ell + m \left(k-\frac{n}{p} \right) \right) \geq n$.
Therefore $\H^\ell(\tilde{Q}\cap F_1\cap f^{-1}(y))=0$ for a.e. $y\in\bbbr^m$.
This completes the proof that
$\H^{\ell}(F_1\cap f^{-1}(y))=0$ for a.e. $y\in\bbbr^m$ and hence that of Claim~\ref{claim1}
\end{proof}

\begin{claim} 
\label{claim2}
$\mathcal{H}^{\ell}((A_{s-1} \setminus A_s) \cap f^{-1}(y))=0$ for a.e. $y \in \mathbb{R}^m$, $s=2,3, \dots , k-1$.
\end{claim}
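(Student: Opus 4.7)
The plan is to reduce the claim to the induction hypothesis in dimension $n-1$ by restricting $f$ to the smooth hypersurface on which an $(s-1)$-st derivative of $f$ vanishes. Given $x_0\in A_{s-1}\setminus A_s$, I would choose an index $i\in\{1,\dots,m\}$, a multi-index $\beta$ with $|\beta|=s-1$, and a coordinate $j$ with $D^{\beta+\delta_j}f_i(x_0)\neq 0$, and then set $h:=D^\beta f_i$. Since $p>n$, Morrey's embedding gives $f\in C^{k-1,1-n/p}$, so $h\in C^{k-s,1-n/p}$ with $k-s\geq 1$, $h(x_0)=0$ (as $x_0\in A_{s-1}$), and $\partial_j h(x_0)\neq 0$. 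After permuting coordinates so that the distinguished direction is the last one, the classical implicit function theorem furnishes a neighborhood $U$ of $x_0$ on which $\{h=0\}$ is the $C^{k-s}$ graph of some $\phi:V\to\bbbr$, $V\subset\bbbr^{n-1}$. Because $A_{s-1}\cap U\subset\{h=0\}\cap U$ and $A_{s-1}\setminus A_s$ admits a countable cover by such $U$'s, it suffices to handle a single piece $A_{s-1}\cap U$.

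Let $g(x'):=(x',\phi(x'))\in C^{k-s}(V,\bbbr^n)$ be the graph parametrization, fix a compact set $A\subset A_{s-1}\cap U$, and set $A^*:=g^{-1}(A)$, so that $A^*$ is compact and $g(A^*)=A$. The function $f$ is $(s-1)$-flat on $A$ by the definition of $A_{s-1}$, and $f\in C^{k-1}$ by Morrey. Applying the Kneser-Glaeser theorem componentwise with $\hat k=k-1$ and $\hat s=s-1$ (matching the regularity $g\in C^{(k-1)-(s-1)}=C^{k-s}$) produces $F\in C^{k-1}(\bbbr^{n-1},\bbbr^m)$ with $F=f\circ g$ on $A^*$ and $F$ being $(s-1)$-flat on $A^*$. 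In particular $F\in W^{k-1,p}_{\loc}(\bbbr^{n-1},\bbbr^m)$, so the induction hypothesis in dimension $n-1$ (with $k$ replaced by $k-1$) applies and yields
$\H^{\ell'}(C_F\cap F^{-1}(y))=0$ for a.e.\ $y\in\bbbr^m$, where $\ell'=\max((n-1)-m-(k-1)+1,0)=\ell$.

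Since $F$ is $(s-1)$-flat on $A^*$ and $s-1\geq 1$, we have $DF=0$ on $A^*$, whence $A^*\subset C_F$ and therefore $\H^\ell(A^*\cap F^{-1}(y))=0$ for a.e.\ $y$. On $A^*$, $F=f\circ g$ gives $A^*\cap F^{-1}(y)=g^{-1}(A\cap f^{-1}(y))$, and because $g$ is a homeomorphism onto its image and is Lipschitz on the compact set $A^*$, the standard inequality $\H^\ell(g(E))\leq(\Lip g)^\ell\H^\ell(E)$ yields $\H^\ell(A\cap f^{-1}(y))=0$ for a.e.\ $y$. Letting $A$ exhaust $A_{s-1}\cap U$ and summing over the countable cover of $A_{s-1}\setminus A_s$ finishes the argument. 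The point I expect to require the most care is the matching between the Sobolev regularity of $f$ and the $C^k$ hypothesis in Kneser-Glaeser; the resolution is to run Kneser-Glaeser at level $\hat k=k-1$, using only the $C^{k-1}$ regularity furnished by Morrey's embedding for $p>n$, and to absorb the lost derivative by invoking the induction hypothesis at $k-1$ in dimension $n-1$, which produces exactly the same value of $\ell$.
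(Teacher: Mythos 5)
Your proposal is correct and follows essentially the same route as the paper: fix a derivative $h=D^\beta f_i$ of order $s-1$ with a nonvanishing first derivative, use the implicit function theorem to realize $A_{s-1}$ locally as a subset of a $C^{k-s}$ graph $g$, apply Kneser--Glaeser at level $(\hat k,\hat s)=(k-1,s-1)$ to produce a $C^{k-1}$ extension $F$ of $f\circ g$ that is $(s-1)$-flat on $A^*$, invoke the inductive hypothesis in dimension $n-1$ (noting $\ell$ is unchanged), and push forward through the locally Lipschitz map $g$. The only cosmetic difference is that the paper fixes a single compactly contained neighborhood $W\Subset U$ and takes $A^*=g^{-1}(\overline W\cap A_{s-1})$, whereas you exhaust $A_{s-1}\cap U$ by compacta; both handle the compactness hypothesis of Kneser--Glaeser and the countable-cover reduction in the same way.
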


In this step, we will use the Kneser-Glaeser composition theorem and the implicit function theorem to apply the induction hypothesis in $\mathbb{R}^{n-1}$.

Fix $s \in \{ 2,3, \dots , k-1 \}$ and $\bar{x} \in A_{s-1} \setminus A_s$. 
It suffices to show that the $\ell$-Hausdorff measure of $W \cap (A_{s-1} \setminus A_s) \cap f^{-1}(y)$ 
is zero for some neighborhood $W$ of $\bar{x}$ and a.e. $y \in \bbbr^m$. 
Indeed, $A_{s-1} \setminus A_s$ can be covered by countably many such neighborhoods.

By the definitions of $A_s$ and $A_{s-1}$, $D^{\gamma}f(\bar{x})=0$ for all $1 \leq |\gamma| \leq s-1$, 
and $D^{\beta}f(\bar{x}) \neq 0$ for some $|\beta| = s$. 
That is, for some $|\gamma| = s-1$ and $j \in \{1 ,\dots, m \}$, $D (D^{\gamma}f_j)(\bar{x}) \neq 0$ 
and $D^{\gamma}f_j \in W^{k-(s-1),p} \subset C^{k-s,1-\frac{n}{p}}$.

Hence, by the implicit function theorem, 
there is some neighborhood $U$ of $\bar{x}$ and an open set $V \subset \mathbb{R}^{n-1}$ 
so that $U \cap \{ D^{\gamma}f_j=0 \} = g(V)$ for some $g: V \to \mathbb{R}^n$ of class $C^{k-s}$. 
In particular, $U \cap A_{s-1} \subset g(V)$ since $D^{\gamma}f_j=0$ on $A_{s-1}$.

Choose a neighborhood $W \Subset U$ of $\bar{x}$ and say $A^* := g^{-1}(\overline{W} \cap A_{s-1})$ so that $A^*$ is compact.
Since $f$ is $s-1$ flat on the closed set $A_{s-1}$, $f$ is of class $C^{k-1}$, $g$ is of class $C^{(k-1)-(s-1)}$, and $g(A^*) \subset A_{s-1}$, 
we can apply Theorem \ref{rough} to each component of $f$ to find a $C^{k-1}$ function $F:\bbbr^{n-1} \to \mathbb{R}^m$ so that, 
for every $x \in A^*$, $F(x) = (f \circ g)(x)$ and $D^{\lambda}F(x)=0$ for all $|\lambda| \leq s-1$. That is, $A^* \subset C_F$. 
Hence
$$
\mathcal{H}^{\ell}(A^* \cap F^{-1}(y)) 
\leq \mathcal{H}^{\ell}(C_F \cap F^{-1}(y)) = 0.
$$
for almost every $y \in \mathbb{R}^m$.
In this last equality, we invoked the induction hypothesis on $F \in C^{k-1}(\bbbr^{n-1},\bbbr^m) \subset W_{\rm loc}^{k-1,p}(\bbbr^{n-1},\bbbr^m)$ with $\ell = \max((n-1)-m-(k-1)+1,0)$.
Since $g$ is of class $C^1$, it is locally Lipschitz, and so 
$\mathcal{H}^{\ell}(g(A^* \cap F^{-1}(y)))=0$
for almost every $y \in \mathbb{R}^m$. Since $W \cap A_{s-1} \subset g(A^*)$, we have
$$
W \cap A_{s-1} \cap f^{-1}(y) \subset g(A^* \cap F^{-1}(y))
$$
for all $y \in \mathbb{R}^m$, and thus
$$\mathcal{H}^{\ell}(W \cap (A_{s-1} \setminus A_s) \cap f^{-1}(y)) \leq \mathcal{H}^{\ell}(W \cap A_{s-1} \cap f^{-1}(y)) = 0$$
for almost every $y \in \mathbb{R}^m$. The proof of the claim is complete.

\begin{claim}
\label{Michal}
$\mathcal{H}^{\ell}(K \cap f^{-1}(y))=0$ for a.e. $y \in \mathbb{R}^m$.
\end{claim}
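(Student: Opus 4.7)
The plan is to exploit the non-vanishing of some partial derivative at each point of $K$ in order to flatten one coordinate via a change of variables, reducing the problem to a slicewise application of the induction hypothesis in dimensions $(n-1)\to(m-1)$. When $m=1$ the set $K$ is empty and there is nothing to prove, so we may assume $m\geq 2$.

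Fix $\bar x\in K$. Since $0<\rank Df(\bar x)<m$, after permuting coordinates in the domain and the target we may arrange that $\partial f_1/\partial x_1(\bar x)\neq 0$. Because $k\geq 2$, Morrey's embedding gives $f\in C^{k-1,1-n/p}_\loc$, so the classical inverse function theorem applies to
$$
\Phi(x):=(f_1(x),x_2,\ldots,x_n),
$$
producing an open neighborhood $W$ of $\bar x$ on which $\Phi$ is a $C^{k-1}$ diffeomorphism onto an open set $\widetilde W\subset\bbbr^n$. Since $K$ is covered by countably many such neighborhoods, it suffices to show $\H^{\ell}(W\cap K\cap f^{-1}(y))=0$ for a.e. $y\in\bbbr^m$.

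Set $F:=f\circ\Phi^{-1}$, so that $F(y)=(y_1,F_2(y),\ldots,F_m(y))$, and for each $t\in\bbbr$ define the slice
$$
G_t(z):=(F_2(t,z),\ldots,F_m(t,z))\quad\text{on}\quad\widetilde W_t:=\{z\in\bbbr^{n-1}:(t,z)\in\widetilde W\}.
$$
Writing $DF$ in block form shows that $\rank DF(t,z)<m$ iff $\rank DG_t(z)<m-1$, and $F^{-1}(t,z)=\{t\}\times G_t^{-1}(z)$, so
$$
C_F\cap F^{-1}(t,z)=\{t\}\times (C_{G_t}\cap G_t^{-1}(z)).
$$
Granting that $F\in W^{k,p}_\loc(\widetilde W,\bbbr^m)$ and $G_t\in W^{k,p}_\loc(\widetilde W_t,\bbbr^{m-1})$ for a.e.\ $t$, the induction hypothesis applies to each such $G_t$ in dimension $(n-1)\to(m-1)$; the relevant exponent is $\max((n-1)-(m-1)-k+1,0)=\ell$, so $\H^{\ell}(C_{G_t}\cap G_t^{-1}(z))=0$ for a.e.\ $z\in\bbbr^{m-1}$. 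Combining these slicewise statements via Fubini, together with the upper-integral trick from the proof of Claim~\ref{claim1} to bypass measurability of $y\mapsto\H^{\ell}(C_F\cap F^{-1}(y))$, yields $\H^{\ell}(C_F\cap F^{-1}(y))=0$ for a.e.\ $y$. Since $\Phi$ is locally bi-Lipschitz and $\Phi(K\cap W)\subset C_F$, the same vanishing transfers back to $f$ and completes the proof.

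The main obstacle is the regularity transfer under the change of variables: one must verify that $\Phi^{-1}\in W^{k,p}_\loc$ and hence that $F\in W^{k,p}_\loc$. This is where the hypothesis $p>n$ is essential a second time. By Morrey, all derivatives of $\Phi$ and $\Phi^{-1}$ of order $\leq k-1$ are locally bounded and $(D\Phi)^{-1}\circ\Phi^{-1}$ is continuous near $\Phi(\bar x)$, so the formal chain rule expresses $D^k\Phi^{-1}$, and then $D^kF$, as a finite sum of products in which each summand carries exactly one $L^p_\loc$ factor, with the remaining factors in $L^\infty_\loc$. The companion slicewise assertion $G_t\in W^{k,p}_\loc$ for a.e.\ $t$ is then a standard Fubini-type property for Sobolev functions applied to $F$ and to its weak derivatives up to order $k$. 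Once these technicalities are in place, the rest of the argument is bookkeeping.
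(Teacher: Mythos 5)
Your proposal is correct, and it takes a genuinely slightly different route from the paper. The paper first decomposes $K=\bigcup_{r=1}^{m-1}K_r$ by the exact rank $r$ of $Df$, and near a point of $K_r$ uses the local diffeomorphism $Y(x)=(f_1(x),\dots,f_r(x),x_{r+1},\dots,x_n)$ that flattens all $r$ of the ``good'' coordinates at once; after the change of variables the sliced map $g_{\tilde x}:\tilde V_{\tilde x}\subset\bbbr^{n-r}\to\bbbr^{m-r}$ actually has $Dg_{\tilde x}=0$ on the relevant set, and the induction is applied with $(n',m')=(n-r,m-r)$. You instead work with all of $K$ at once, flatten only the single coordinate $f_1$ via $\Phi(x)=(f_1(x),x_2,\dots,x_n)$, and observe that the block-triangular form of $DF$ gives $\rank DF(t,z)=1+\rank DG_t(z)$, so critical points of $F$ slice exactly to critical points of $G_t$; you then invoke the induction with $(n',m')=(n-1,m-1)$. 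Since $\ell=\max(n-m-k+1,0)$ is invariant under simultaneously decreasing $n$ and $m$ by the same amount, both reductions target the same exponent, and $p>n>n-1$ keeps the Sobolev hypothesis valid. Your version buys a small simplification (no need to stratify by rank), at no cost; the paper's $K_r$ stratification merely yields the stronger-than-necessary conclusion $Dg_{\tilde x}=0$ before weakening it to ``contained in $C_{g_{\tilde x}}$.''

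The genuine technical work --- proving $\Phi^{-1}\in W^{k,p}_\loc$ and hence $F\in W^{k,p}_\loc$ --- is where the paper invests its effort (Lemmas on products and compositions of $W^{\ell,p}_\loc$ functions for $p>n$, plus the Sobolev Fubini theorem to get $G_t\in W^{k,p}_\loc$ for a.e.\ $t$). You correctly identify this as the main obstacle and sketch the right mechanism (Morrey embedding makes lower-order derivatives bounded, so each term in the formal chain/product rule has at most one $L^p_\loc$ factor), but to be a complete proof this sketch would need the same two lemmas the paper isolates: that $W^{\ell,p}_\loc$ is closed under products when $p>n$, and that composition with a $W^{k,p}_\loc$ diffeomorphism ($p>n$) preserves $W^{k,p}_\loc$. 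With those in hand your argument is complete.
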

\begin{proof}
Write $K = \bigcup_{r=1}^{m-1} K_r$ where $K_r := \{ x \in \bbbr^n \, | \, \text{rank} \, Df(x)=r \}$. 
Fix $x_0 \in K_r$ for some $r \in \{ 1, \dots , m-1 \}$. 
For the same reason as in Claim~\ref{claim2} it suffices to
show that $\mathcal{H}^{\ell}((V \cap K_r) \cap f^{-1}(y))=0$ for some neighborhood $V$ of $x_0$ for a.e. $y \in \mathbb{R}^m$.

Without loss of generality, assume that the submatrix $\left[ \partial f_i/\partial x_j (x_0) \right]_{i,j=1}^r$ 
formed by the first $r$ rows and columns of $Df$ has rank $r$. Let
\begin{equation}
\label{M1}
Y(x) = (f_1(x), f_2(x), \dots , f_r(x), x_{r+1}, \dots , x_n) \quad 
\text{for all $x\in\bbbr^n$.}
\end{equation}
$Y$ is of class $C^{k-1}$ since each component of $f$ is. Also, $\text{rank} \, DY(x_0) = n$, so by the inverse function theorem $Y$ is a 
$C^{k-1}$ diffeomorphism of some neighborhood $V$ of $x_0$ onto an open set $\tilde{V} \subset \mathbb{R}^n$.
From now on we will assume that $Y$ is defined in $V$ only.

\begin{claim}
\label{MC1}
$Y^{-1}\in W^{k,p}_{\rm loc}(\tilde{V},\bbbr^n)$.
\end{claim}
\begin{proof}
In the proof we will need
\begin{lemma}
\label{ML1}
Let $\Omega\subset\bbbr^n$ be open. If $g,h\in W^{\ell,p}_{\rm loc}(\Omega)$, where $p>n$ and $\ell\geq 1$, then $gh\in W^{\ell,p}_{\rm loc}(\Omega)$.
\end{lemma}
\begin{proof}
Since $g,h\in C^{\ell-1}$, it suffices to show that the classical partial derivatives $D^{\beta}(gh)$, $|\beta|=\ell-1$ belong to $W^{1,p}_{\rm loc}(\Omega)$
(when $\ell=1$, $\beta=0$ so $D^\beta(gh)=gh$). 

The product rule for $C^{\ell-1}$ functions yields
\begin{equation}
\label{M2}
D^\beta(gh)=\sum_{\gamma+\delta=\beta} \frac{\beta!}{\gamma!\,\delta!} \, D^\gamma g D^\delta h.
\end{equation}
Each of the functions $D^\gamma g$, $D^\delta h$ is absolutely continuous on almost all lines parallel to coordinate axes, \cite[Section~4.9.2]{Evans},
so is their product. Thus $D^\beta(gh)$ is absolutely continuous on almost all lines and hence it has partial derivatives (or order $1$)
almost everywhere. According to a characterization of $W^{1,p}_{\rm loc}$ by absolute continuity on lines, \cite[Section~4.9.2]{Evans},
it suffices to show that partial derivatives of $D^\beta(gh)$ (of order $1$) belong to $L^p_{\rm loc}$. This will imply that
$D^\beta(gh)\in W^{1,p}_{\rm loc}$ for all $\beta$, $|\beta|=\ell-1$ so $gh\in W^{\ell,p}_{\rm loc}$.

If $D^\alpha =D^{\delta_i}D^{\beta}$,  then the product rule applied to the right hand side of \eqref{M2} yields
$$
D^\alpha(gh)=\sum_{\gamma+\delta=\alpha} \frac{\alpha!}{\gamma!\,\delta!}\, D^\gamma g D^\delta h.
$$
If $|\gamma|<|\alpha|=\ell$ and $|\delta|<|\alpha|=\ell$, then the function $D^\gamma g D^\delta h$ is continuous and hence in $L^p_{\rm loc}$.
The remaining terms are $hD^\alpha g+g D^\alpha h$. Clearly this function also belongs to $L^p_{\rm loc}$ because the functions $g,h$ are continuous and 
$D^\alpha g, D^\alpha h\in L^p_{\rm loc}$.
This completes the proof of the lemma.
\end{proof}
Now we can complete the proof of Claim~\ref{MC1}. Since $Y$ is a diffeomorphism of class $C^{k-1}$, we have
\begin{equation}
\label{M3}
D(Y^{-1})(y)=[DY(Y^{-1}(y))]^{-1}
\quad
\text{for every $y\in\tilde{V}$}.
\end{equation}
It suffices to prove that $D(Y^{-1})\in W^{k-1,p}_{\rm loc}$. It follows from \eqref{M3} and a formula for the inverse matrix that
$$
D(Y^{-1}) = \left(\frac{P_1(Df)}{P_2(Df)}\right)\circ Y^{-1},
$$
where $P_1$ and $P_2$ and polynomials whose variables are replaced by partial derivatives of $f$. The polynomial $P_2(Df)$ is just
$\det DY$.

Since $Df\in W^{k-1,p}_{\rm loc}$ and $p>n$, it follows from Lemma~\ref{ML1} that
$$
P_1(Df), P_2(Df)\in W^{k-1,p}_{\rm loc}.
$$
Note that $P_2(Df)=\det DY$ is continuous and different than zero. Hence
$$
\frac{1}{P_2(Df)}\in W^{k-1,p}_{\rm loc}
$$
as a composition of a $W^{k-1,p}_{\rm loc}$ function which is locally bounded away from $0$ and $\infty$
with a smooth function $x\mapsto x^{-1}$. Thus Lemma~\ref{ML1} applied one more time yields that
$P_1(Df)/P_2(Df)\in W^{k-1,p}_{\rm loc}$. Finally
$$
D(Y^{-1}) = \left(\frac{P_1(Df)}{P_2(Df)}\right)\circ Y^{-1}\in W^{k-1,p}_{\rm loc}
$$
because composition with a diffeomorphism $Y^{-1}$ of class $C^{k-1}$ preserves $W^{k-1,p}_{\rm loc}$.
The proof of the claim is complete.
\end{proof}

It follows directly from \eqref{M1} that
\begin{equation}
\label{M4}
f(Y^{-1}(x))=(x_1,\ldots,x_r,g(x))
\end{equation}
for all $x\in\tilde{V}$ and some function $g:\tilde{V}\to\bbbr^{m-r}$.
\begin{claim}
\label{MC2}
$g\in W^{k,p}_{\rm loc}(\tilde{V},\bbbr^{m-r})$.
\end{claim}
This statement is a direct consequence of the next
\begin{lemma}
\label{ML2}
Let $\Omega\subset\bbbr^n$ be open, $p>n$ and $k\geq 1$. If $\Phi\in W^{k,p}_{\rm loc}(\Omega,\bbbr^n)$ is a diffeomorphism and 
$u\in W^{k,p}_{\rm loc}(\Phi(\Omega))$, then $u\circ\Phi\in W^{k,p}_{\rm loc}(\Omega)$.
\end{lemma}
\begin{proof}
When $k=1$ the result is obvious because diffeomorphisms preserve $W^{1,p}_{\rm loc}$. Assume thus that $k\geq 2$.
Since $p>n$, $\Phi\in C^{k-1}$ so $\Phi$ is a diffeomorphism of class $C^{k-1}$, but also $u\in C^{k-1}\subset C^1$ and hence the
classical chain rule gives
\begin{equation}
\label{M5}
D(u\circ\Phi)=((Du)\circ\Phi)\cdot D\Phi.
\end{equation}
Since $Du\in W^{k-1,p}_{\rm loc}$ and $\Phi$ is a diffeomorphism of class $C^{k-1}$,
we conclude that $(Du)\circ\Phi\in W^{k-1,p}_{\rm loc}$. Now the fact that $D\Phi\in W^{k-1,p}_{\rm loc}$ combined with \eqref{M5} and Lemma~\ref{ML1} yield that
the right hand side of \eqref{M5} belongs to $W^{k-1,p}_{\rm loc}$ so $D(u\circ\Phi)\in W^{k-1,p}_{\rm loc}$ and hence
$u\circ\Phi\in W^{k,p}_{\rm loc}$. This compltes the proof of Lemma~\ref{ML2} and hence that of Claim~\ref{MC2}.
\end{proof}

Now we can complete the proof of Claim~\ref{Michal}.
Recall that we need to prove that
\begin{equation}
\label{M6}
\H^\ell((V\cap K_r)\cap f^{-1}(y))=0
\quad
\text{for a.e. $y\in \bbbr^m$.}
\end{equation}
The diffeomorphism $Y^{-1}$ is a change of variables that simplifies the structure of the mapping $f$ because $f\circ Y^{-1}$ fixes the first $r$ coordinates
(see \eqref{M4}) and hence it maps $(n-r)$-dimensional slices orthogonal to $\bbbr^r$ to the corresponding $(m-r)$-dimensional slices orthogonal to $\bbbr^r$.
Because of this observation it is more convenient to work with $f\circ Y^{-1}$ rather than with $f$. Translating \eqref{M6} to the case of $f\circ Y^{-1}$
it suffices to show that
$$
\H^\ell((\tilde{V}\cap Y(K_r))\cap (f\circ Y^{-1})^{-1})(y)=0
\quad
\text{for a.e. $y\in\bbbr^m$}.
$$
We used here a simple fact that the diffeomorphism $Y$ preserves $\ell$-null sets. 

Observe also that
\begin{equation}
\label{M7}
\rank D(f\circ Y^{-1})(x)=r
\quad
\text{for $x\in\tilde{V}\cap Y(K_r)$}.
\end{equation}

For any $\tilde{x} \in \bbbr^r$ and $A \subset \bbbr^n$, we will denote by $A_{\tilde{x}}$ the $(n-r)$--dimensional slice of $A$ with the first 
$r$ coordinates equal to $\tilde{x}$. That is, $A_{\tilde{x}} := \{ z \in \mathbb{R}^{n-r} \, | \, (\tilde{x} , z) \in A \}$.
Let $g_{\tilde{x}}:\tilde{V}_{\tilde{x}}\to\bbbr^{m-r}$ be defined by $g_{\tilde{x}}(z)=g(\tilde{x},z)$. With this notation
$$
(f\circ Y^{-1})(\tilde{x},z)=(\tilde{x},g_{\tilde{x}}(z))
$$
and hence for $y=(\tilde{x},w)\in \bbbr^m$
$$
(\tilde{V}\cap Y(K_r))\cap (f\circ Y^{-1})^{-1}(y)=
g_{\tilde{x}}^{-1}(w)\cap (\tilde{V}\cap Y(K_r))_{\tilde{x}}.
$$
More precisely the set on the left hand side is contained in an affine 
$(n-r)$-dimensional subspace of $\bbbr^n$ orthogonal to $\bbbr^r$ at $\tilde{x}$ while the set on the right hand side is
contained in $\bbbr^{n-r}$ but the two sets are identified through a translation by the vector $(\tilde{x},0)\in\bbbr^n$ which 
identifies $\bbbr^{n-r}$ with the affine subspace orthogonal to $\bbbr^r$ at $\tilde{x}$.

According to the Fubini theorem it suffices to show that for almost all $\tilde{x}\in\bbbr^r$ the following is true:
for almost all $w\in\bbbr^{m-r}$
\begin{equation}
\label{M8}
\H^\ell (g_{\tilde{x}}^{-1}(w)\cap (\tilde{V}\cap Y(K_r)_{\tilde{x}}))=0.
\end{equation}
As we will see this is a direct consequence of the induction hypothesis applied to the mapping
$g_{\tilde{V}}:\tilde{x}_{\tilde{x}}\to\bbbr^{n-r}$ defined in a set of dimension $n-r\leq n-1$. 
We only need to check that $g_{\tilde{x}}$ satisfies the assumptions of the induction hypothesis.

It is easy to see that for each $x=(\tilde{x},z)\in\tilde{V}$
$$
D(f \circ Y^{-1})(x) = 
\left( \begin{array}{ccc}
\id_{r \times r} & 0 \\
* & D(g_{\tilde{x}})(z) \end{array} \right).
$$
This and \eqref{M7} imply that for each $\tilde{x}\in\bbbr^r$, $Dg_{\tilde{x}}=0$ on the slice $(\tilde{V}\cap Y(K_r))_{\tilde{x}}$. Hence
the set $(\tilde{V}\cap Y(K_r))_{\tilde{x}}$ is contained in the critical set of $g_{\tilde{x}}$ so
\begin{equation}
\label{M9}
\H^\ell(g_{\tilde{x}}^{-1}(w)\cap(\tilde{V}\cap Y(K_r))_{\tilde{x}})\leq 
\H^\ell(g_{\tilde{x}}^{-1}(w)\cap C_{g_{\tilde{x}}}).
\end{equation}
It follows from the Fubini theorem applied to Sobolev spaces that for almost all $\tilde{x}\in\bbbr^n$,
$g_{\tilde{x}}\in W^{k,p}_{\rm loc}(\tilde{V}_{\tilde{x}},\bbbr^{m-r})$ and hence the induction hypothesis is satisfied for such mappings
$$
W^{k,p}_{\rm loc}\ni g_{\tilde{x}}:\tilde{V}_{\tilde{x}}\subset\bbbr^{n-r}\to\bbbr^{m-r}.
$$
Since
$$
\ell=\max(n-m-k+1,0)=\max((n-r)-(m-r)-k+1,0),
$$
for almost all $w\in \bbbr^{m-n}$ the expression on the right hand side of \eqref{M9} equals zero and \eqref{M8} follows.
This completes the proof of Claim~\ref{Michal} and hence that of the theorem.
\end{proof}

\end{document}